\author{G. Chiaselotti, G. Marino, C.Nardi}
\address{Chiaselotti Giampiero\\Dipartimento di Matematica, Universit\'a della Calabria, Via Pietro Bucci,
Cubo 30B, 87036 Arcavacata di Rende (CS), Italy.}
\email{chiaselotti@unical.it}
\address{Marino Giuseppe\\Dipartimento di Matematica, Universit\'a della Calabria, Via Pietro Bucci,
Cubo 30C, 87036 Arcavacata di Rende (CS), Italy.}
\email{gmarino@unical.it}
\address{Nardi Caterina\\Dipartimento di Matematica, Universit\'a della Calabria, Via Pietro Bucci,
Cubo 30C, 87036 Arcavacata di Rende (CS), Italy.}
\email{nardi@mat.unical.it}
\title[A minimum problem for finite sets]{A minimum problem for finite sets of real numbers with non-negative sum}
\date{\today}
\newtheorem{inizio}{Lemma}[section]
\newtheorem{theorem}[inizio]{Theorem}
\newtheorem{lemma}[inizio]{Lemma}
\newtheorem{definition}[inizio]{Definition}
\newtheorem{o-problem}[inizio]{Open Problem}
\newtheorem*{teo-L}{Theorem}
\newtheorem*{corollary-s}{Corollary}
\theoremstyle{definition}
\newtheorem{example}[inizio]{Example}
\begin{document}
\subjclass{Primary: 05D05}
\keywords{Graded lattices,  weight functions, boolean maps, extremal sum problems.}
\abstract
Let $n$ and $r$ be two integers such that $0 < r \le n$; we denote by $\gamma(n,r)$ [$\eta(n,r)$] the minimum [maximum] number of the non-negative partial sums of a sum $\sum_{1=1}^n a_i \ge 0$, where $a_1, \cdots, a_n$ are $n$ real numbers arbitrarily chosen in such a way that $r$ of them are non-negative and the remaining $n-r$ are negative. Inspired by some interesting extremal combinatorial sum problems raised by Manickam, Mikl\"os and Singhi in 1987 \cite{ManMik87} and 1988 \cite{ManSin88} we study the following two problems:

\noindent$(P1)$ {\it which are the values of $\gamma(n,r)$ and $\eta(n,r)$ for each $n$ and $r$, $0 < r \le n$?}

\noindent$(P2)$ {\it if $q$ is an integer such that $\gamma(n,r) \le q \le \eta(n,r)$, can we find $n$ real numbers $a_1, \cdots, a_n$, such that $r$ of them are non-negative and the remaining $n-r$ are negative with $\sum_{1=1}^n a_i \ge 0$, such that the number of the non-negative sums formed from these numbers is exactly $q$?}

\noindent We prove that the solution of the problem $(P1)$ is given by $\gamma(n,r) = 2^{n-1}$ and $\eta(n,r) = 2^n - 2^{n-r}$. We provide a partial result of the latter problem showing that the answer is affirmative for the weighted boolean maps. With respect to the problem $(P2)$ such maps (that we will introduce in the present paper) can be considered a generalization of the multisets $a_1, \cdots, a_n$ with $\sum_{1=1}^n a_i \ge 0$. More precisely we prove that for each $q$ such that $\gamma(n,r) \le q \le \eta(n,r)$ there exists a weighted boolean map having exactly $q$ positive boolean values.
\endabstract

\maketitle

\section{Introduction}
In \cite{ManMik87} and \cite{ManSin88} Manickam, Mikl\"os and Singhi raised several interesting extremal combinatorial sum problems, two of which will be described below. Let $n$ and $r$ be two integers such that $0 < r \le n$; we denote by $\gamma(n,r)$ [$\eta(n,r)$] the minimum [maximum] number of the non-negative partial sums of a sum $\sum_{1=1}^n a_i \ge 0$, when $a_1, \cdots, a_n$ are $n$ real numbers arbitrarily chosen in such a way that $r$ of them are non-negative and the remaining $n-r$ are negative. Put $A(n) = \min \{\gamma(n,r) : 0 < r \le n \}$. In \cite{ManMik87} the authors answered the following question:

\noindent$(Q1)$ {\it which is the value of $A(n)$?}

\noindent In Theorem 1 of \cite{ManMik87} they found that $A(n)=2^{n-1}$. On the other side, from the proof of Theorem 1 of \cite{ManMik87} also it follows that $\gamma(n,r) \ge 2^{n-1}$ for each $r$ and that $\gamma(n,1) \le 2^{n-1}$; therefore $\gamma(n,1) = 2^{n-1}$ (since $2^{n-1} = A(n) \le \gamma(n,1) \le 2^{n-1}$). It is natural to set then the following problem which is a refinement of $(Q1)$:

\noindent$(P1)$  {\it which are the values of $\gamma(n,r)$ and $\eta(n,r)$ for each $n$ and $r$ with $0 < r \le n$?}

\noindent In the first part of this paper we solve the problem $(P1)$ and we prove (see Theorem \ref{firstTheorem}) that $\gamma(n,r) = 2^{n-1}$ and $\eta(n,r) = 2^n - 2^{n-r}$ for each positive integer $r \le n$.

\noindent A further question that the authors raised in \cite{ManMik87} is the following:

\noindent$(Q2)$ {\it ``We do not know what is the range of the possible numbers of the non-negative partial sums of a non-negative $n$-element sum. The minimum is $2^{n-1}$ as it was proven and the maximum is obviously $2^n - 1$ but we do not know which are the numbers between them for which we can find reals $a_1, \cdots, a_n$ with $\sum_{1=1}^n a_i \ge 0$ such that the number of the non-negative sums formed from these numbers is equal to that number.''}

\noindent The following problem is a  natural refinement of (Q2):

\noindent$(P2)$ {\it If $q$ is an integer such that $\gamma(n,r) \le q \le \eta(n,r)$, can we find $n$ real numbers $a_1, \cdots, a_n$, such that $r$ of them are non-negative and the remaining $n-r$ are negative with $\sum_{1=1}^n a_i \ge 0$, such that the number of the non-negative sums formed from these numbers is exactly $q$?}\\
\noindent In the latter part of this paper (see Theorem \ref{mainResult}) we give a partial solution to the problem $(P2)$.

To be more precise in the formulation of the problems that we study and to better underline the links with some interesting problems raised in \cite{ManMik87} and \cite{ManSin88}, it will be convenient identify a finite set of real numbers with an appropriate real valued function. Let then $n$ and $r$ be two fixed integers such that $0 < r \le n$ and let $I_n =\{ 1,2 \cdots , n \}$ (we call $I_n$ the index set). We denote by $W(n,r)$ the set of all the functions $f:I_n \to \mathbb{R}$ such that $\sum_{x\in I_n} f(x) \ge 0$ and $|\{x \in I_n: f(x) \ge 0 \}|=r$.  If $f\in W(n,r)$ we set $\alpha (f) = |\{ Y \subseteq I_n : \sum_{y \in Y} f(y) \ge 0 \}|$. It is easy to observe that $\gamma (n,r)=\min \{ \alpha (f) : f \in W(n,r) \}$ and $\eta (n,r)=\max \{ \alpha (f) : f \in W(n,r) \}$. We can reformulate the problem $(P2)$ in an equivalent way using the functions terminology instead of the sets terminology:

\noindent$(P2)$\,\,{\it If $q$ is an integer such that $\gamma(n,r) \le q \le \eta(n,r)$, does there exist a function $f \in W(n,r)$ with the property that $\alpha(f)=q$?}\\
\noindent To solve the problem $(P1)$ and (partially) $(P2)$, we use some abstract results on a particular class of lattices introduced in \cite{BisChias} and \cite{BisChias2}. In this paper we substantially continue the research project started in \cite{Chias02}, which is the attempt to solve some extremal sum problems raised in \cite{ManMik87} and \cite{ManSin88} and further studied in \cite{bhatta-2003},\cite{bhatt-thesis}, \cite{bier-man}, \cite{BisChias2}, \cite{ChiasInfMar08}, \cite{chias-mar-2002}, \cite{manic-88}, \cite{manic-91}.

\section{A Partial Order on the Subsets of $I_n$}
 Of course if we take two functions $f,g \in W(n,r)$ such that $f(I_n)=g(I_n)$, then $\alpha(f)=\alpha(g)$. This implies that if we define on $W(n,r)$ the equivalence relation $f \sim g$ iff $f(I_n)=g(I_n)$ and we denote by $[f]$ the equivalence class of a function $f\in W(n,r)$, then the definition $\beta([f])=\alpha(f)$ is well placed. It is also clear that it holds $\gamma (n,r)=\min \{ \beta ([f]) : [f] \in W(n,r)/\sim \}$ and $\eta (n,r)=\max \{ \beta ([f]) : [f] \in W(n,r)/\sim \}$. Now, when we take an equivalence class $[f]\in W(n,r)/\sim$, there is a unique $f^*\in [f]$ such that
 \begin{equation}\label{decreasing(f*)}
 f^*(r) \ge \cdots \ge f^*(1) \ge 0 > f^*(r+1) \ge \cdots \ge f^*(n)
 \end{equation}
 We can then identify the quotient set $W(n,r)/\sim$ with the subset of all the functions $f^* \in W(n,r)$ that satisfy the condition (\ref{decreasing(f*)}). This simple remark conducts us to rename the indexes of $I_n$ as follows: $\tilde{r}$ instead of $r$, $\dots$, $\tilde{1}$ instead of $1$, $\overline{1}$ instead of $r+1$, $\dots$, $\overline{n-r}$ instead of $n$. Therefore, if we set $I(n,r)=\{\tilde{1}, \cdots, \tilde{r}, \overline{1}, \cdots, \overline{n-r}\}$, we can identify the quotient set $W(n,r)/\sim$ with the set of all the functions $f:I(n,r) \to \mathbb{R}$ that satisfy the following two conditions:

\begin{equation}\label{weightCondition(f)}
f(\tilde{r}) + \cdots + f(\tilde{1}) + f(\overline{1}) + \cdots + f(\overline{n-r}) \ge 0
\end{equation}\\
and\\
\begin{equation}\label{decreasing(f)}
f(\tilde{r}) \ge \cdots \ge f(\tilde{1}) \ge 0 > f(\overline{1}) \ge \cdots \ge f(\overline{n-r})
\end{equation}

\noindent Now, if a generic function $f:I(n,r) \to \mathbb{R}$ that satisfies (\ref{weightCondition(f)}) and (\ref{decreasing(f)}) is given,  we are interested to find all the subsets $Y \subseteq I(n,r)$ such that $\sum_{y\in Y}f(y) \ge 0$. This goal becomes then easier if we can have an appropriate partial order $\sqsubseteq$ on the power set $\mathcal{P}(I(n,r))$ "compatible" with the total order of the partial sums inducted by $f$, i.e. a partial order $\sqsubseteq$ that satisfies the following {\it monotonicity property}: if $Y,Z \in \mathcal{P}(I(n,r))$ then $\sum_{z \in Z} f(x) \le \sum_{y \in Y} f(y)$ whenever $Z \sqsubseteq Y$. To have such a partial order $\sqsubseteq$ on $\mathcal{P}(I(n,r))$ that has the monotonicity property we must introduce a new formal symbol that we denote by $0^§$. We add this new symbol to the index set $I(n,r)$, so we set $A(n,r)=I(n,r)\cup\{0^§\}$. We introduce on $A(n,r)$ the following total order:
$$\overline{n-r} \prec \cdots \prec \overline{2} \prec \overline{1} \prec 0^§ \prec \tilde {1} \prec \tilde{2}\prec \cdots \prec \tilde{r}$$

\noindent If $i,j \in A(n,r),$ then we write : $i \preceq j$ for $i=j$ or $i \prec j$.
We denote by $S(n,r)$ the set of all the formal expressions $i_1 \cdots  i_r | j_1 \cdots  j_{n-r}$ (hereafter called {\it strings}) that satisfy the following properties:

\noindent i)  $i_1, \cdots, i_r \in \{ \tilde{1}, \cdots, \tilde{r}, 0^§ \},$

\noindent ii) $j_1, \cdots, j_{n-r} \in \{ \overline{1}, \cdots, \overline{n-r}, 0^§ \},$

\noindent iii) $i_1\succeq \cdots \succeq i_r \succeq 0^§ \succeq j_1 \succeq \cdots \succeq j_{n-r},$

\noindent iv) the unique element which can be repeated is $0^§$.\\
\noindent In the sequel we often use the lowercase letters $u, w, z, ...$  to denote a generic string in $S(n,r)$. Moreover to make smoother reading, in the numerical examples the formal symbols which appear in a string will be written without  $\;\tilde{}\;$ $\;\bar{}\;$ and $\;^§\;$ ; in such way the vertical bar $|$ will indicate that the symbols on the left of $|$ are in
$\{ \tilde{1}, \cdots, \tilde{r}, 0^§ \}$ and the symbols on the right of $|$ are elements in $\{ 0^§, \overline{1}, \cdots, \overline{n-r} \}.$\\
\noindent For example, if $n=3$ and $r=2$, then $A(3,2)= \{ \tilde{2} \succ \tilde{1} \succ 0^§ \succ \overline{1} \}$ and
 $S(3,2)= \{ 21|0, \,\,\, 21|1,\,\,\, 10|0,\,\,\, 20|0,\,\,\, 10|1,\,\,\, 20|1,\,\,\, 00|1,\,\,\, 00|0 \}$.

\noindent Note that there is a natural bijective set-correspondence $*: w \in S(n,r) \mapsto w^* \in \mathcal{P}(I(n,r))$ between $S(n,r)$ and $\mathcal{P}(I(n,r))$ defined as follows: if $w = i_1 \cdots  i_r | j_1 \cdots  j_{n-r} \in S(n,r)$ then $w^*$ is the subset of $I(n,r)$ made with the elements $i_k$ and $j_l$ such that $i_k \neq 0^§$ and $j_l \neq 0^§$. For example, if $w=4310|013 \in S(7,4)$, then $w^*=\{ \tilde{1}, \tilde{3}, \tilde{4}, \overline{1}, \overline{3} \}$. In particular, if $w=0 \cdots 0|0 \cdots 0$ then $w^*=\emptyset$.

\noindent Now, if $v=i_1 \cdots  i_r |j_1 \cdots  j_{n-r}$ and $w=i'_1 \cdots  i'_r | j'_1 \cdots  j'_{n-r}$ are two strings in $S(n,r)$, we define: $v \sqsubseteq w$ iff  $i_1 \preceq i'_1, \cdots , i_r \preceq i'_r, j_1 \preceq j'_1, \cdots , j_{n-r} \preceq j'_{n-r}$.

\noindent It is easily seen that:

\noindent$1)$ $(S(n,r), \sqsubseteq)$ is a finite distributive (hence also graded) lattice with minimum element $0\cdots 0|12\cdots(n-r)$ and maximum element $r(r-1)\cdots21|0\cdots0$;

\noindent $2)$ $(S(n,r), \sqsubseteq)$ has the following unary complementary operation $c$:

\noindent $(p_1\cdots p_k0\cdots0|0\cdots0q_1\cdots q_l)^c = p'_1\cdots p'_{r-k}0\cdots0|0\cdots0q'_1\cdots q'_{n-r-l}$,

\noindent where $\{p'_1,\cdots ,p'_{r-k}\}$ is the usual complement of $\{p_1,\cdots,p_k\}$ in $\{\tilde{1}, \cdots, \tilde{r}\}$, and\\
$\{q'_1,\cdots ,q'_{n-r-l}\}$ is the usual complement of $\{q_1,\cdots,q_l\}$ in $\{\overline{1}, \cdots, \overline{n-r}\}$ (for example, in $S(7,4)$, we have that $(4310|001)^c = 2000|023$).

Since we have the formal necessity to consider functions $f$ defined on the extended set $A(n,r)$ instead of on the indexes set $I(n,r)$, then  we will put $f(0^§)=0$. Precisely we can identify the quotient set $W(n,r)/\sim$ with the set $WF(n,r)$, defined by

$WF(n,r)=\{f: A(n,r) \to \mathbb{R}:f(\tilde{r}) \ge \cdots \ge f(\tilde{1}) \ge f(0^§)=0 > f(\overline{1}) \ge \cdots \ge f(\overline{n-r})$ and $f(\tilde{r}) + \cdots + f(\tilde{1}) + f(\overline{1}) + \cdots + f(\overline{n-r}) \ge 0\}$

\noindent We call an element of $WF(n,r)$ a $(n,r)$-{\it weight function} and if $f\in WF(n,r)$ we will continue to set $\alpha (f) :=|\{ Y \subseteq I(n,r) : \sum_{y \in Y} f(y) \ge 0 \}|$. Therefore, with these last notations we have that $\gamma (n,r)=\min \{ \alpha (f) : f \in WF(n,r) \}$, $\eta (n,r)=\max \{ \alpha (f) : f \in WF(n,r) \}$ and the question $(P2)$ becomes equivalent to the following:

\noindent $(P2)$\,\,{\it If $q$ is an integer such that $\gamma(n,r) \le q \le \eta(n,r)$, does there exist a function $f \in WF(n,r)$ with the property that $\alpha(f)=q$?}

\section{Boolean Maps induct by Weight Functions}

We denote by $\bf{2}$ the boolean lattice composed of a chain with 2 elements that we denote $N$ (the minimum element) and $P$ (the maximum element). A {\it Boolean map} (briefly BM) {\it on} $S(n,r)$ is a map $A : dom(A)\subseteq S(n,r) \to {\bf 2}$, in particular if $dom(A)=S(n,r)$ we also say that $A$ is a {\it Boolean total map} (briefly BTM) {\it on} $S(n,r)$. If $A$ is BM on $S(n,r)$, we set $S_A ^+ (n,r) = \{ w \in dom(A) : A(w)=P \}$.

If $f\in WF(n,r)$, the {\it sum function} $\Sigma_f : S(n,r) \to \mathbb{R}$ induced by $f$ on $S(n,r)$ is the function that associates to $w=i_r \cdots i_1 |j_1 \cdots j_{n-r}\in S(n,r)$ the real number $\Sigma_f (w)=f(i_1)+ \cdots + f(i_r) + f(j_1) + \cdots + f(j_{n-r})$ and therefore we can associate to  $f\in WF(n,r)$ the map $A_f : S(n,r) \to \bf{2}$ setting
$$
A_f (w) = \left\{ \begin{array}{lll}
                  P & \textrm{if} & \Sigma_f (w) \ge 0 \,\,\textrm{and}\,\, w \neq 0\cdots 0|0\cdots 0\\
                  N & \textrm{if} & w =0\cdots 0|0\cdots 0\\
                  N & \textrm{if} & \Sigma_f (w) < 0
                  \end{array} \right.
$$

\noindent Let us note that $|S^+_{A_f}(n,r)|=|\{ w \in S(n,r) : A_f(w)=P \}|= \alpha(f),$ and so \\
\noindent $\gamma(n,r)=\min\{|S^+_{A_f}(n,r)|:f\in WF(n,r)\},$ $\eta(n,r)=\max\{|S^+_{A_f}(n,r)|:f\in WF(n,r)\}.$

\noindent Our goal is now to underline that some properties of such maps simplify the study of our problems. It is easy to observe that the map $A_f$ has the following properties:

\begin{itemize}
\item[(i)] $A_{f}$ is order-preserving;\\
\item[(ii)] If $w \in S(n,r)$ is such that $A_{f}(w)=N$, then  $A_{f}(w^c) =P;$ \\
\item[(iii)] $A_{f}(10 \cdots 0|0 \cdots 0)=P$, $A(0 \cdots 0|0 \cdots 0)=N$ and $A_{f}(r \cdots 21|12 \cdots (n-r))=P.$
\end{itemize}

\begin{example}

Let $f$ be the following $(5,3)$-weight function :

\begin{equation*}
   f:\begin{array}{ccccccccc}
    \tilde{3} & \tilde{2} & \tilde{1} & \overline{1} & \overline{2} & \\
    \downarrow & \downarrow & \downarrow & \downarrow & \downarrow & \\
       1         &     1      &      0.9   &     -0.8   &   -2.1   & \\
  \end{array}
\end{equation*}

\noindent We represent the map $A_f$ defined on  $S(5,3)$ by using the Hasse diagram of this lattice, on which we color green the nodes where the map $A_f$ assumes value P and red the nodes where it assumes value N:

\begin{center}

\begin{tikzpicture}
 [inner sep=1.0mm,
 placeg/.style={circle,draw=black!100,fill=green!100,thick},
 placer/.style={circle,draw=black!100,fill=red!100,thick},scale=0.4]

 \path
{ (0,0)node(1a) [placer,label=270:{\footnotesize$000|12$}]{}}

{ (-4,3)node(1b) [placer,label=180:{\footnotesize$100|12$}]{}}
{(4,3)node(2b) [placer,label=0:{\footnotesize$000|02$}]{}}

{ (-4,6)node(1c) [placer,label=180:{\footnotesize$200|12$}]{}}
{ (0,6)node(2c) [placer,label=90:{\footnotesize$100|02$}]{}}
{  (4,6)node(3c) [placer,label=0:{\footnotesize$000|01$}]{}}

{ (-8,9)node(1d) [placer,label=180:{\footnotesize$300|12$}]{}}
{ (-4,9)node(2d) [placer,label=180:{\footnotesize$210|12$}]{}}
{ (0,9)node(3d) [placer,label=90:{\footnotesize$200|02$}]{}}
{ (4,9)node(4d) [placeg,label=0:{\footnotesize$100|01$}]{}}
{ (8,9)node(5d) [placeg,label=0:{\footnotesize$000|00$}]{}}

{ (-8,12)node(1e) [placer,label=180:{\footnotesize$310|12$}]{}}
{ (-4,12)node(2e) [placer,label=180:{\footnotesize$300|02$}]{}}
{ (0,12)node(3e) [placer,label=90:{\footnotesize$210|02$}]{}}
{ (4,12)node(4e) [placeg,label=0:{\footnotesize$200|01$}]{}}
{ (8,12)node(5e) [placeg,label=0:{\footnotesize$100|00$}]{}}

{ (-8,15)node(1f) [placer,label=180:{\footnotesize$320|12$}]{}}
{ (-4,15)node(2f) [placer,label=180:{\footnotesize$310|02$}]{}}
{ (0,15)node(3f) [placeg,label=90:{\footnotesize$300|01$}]{}}
{ (4,15)node(4f) [placeg,label=0:{\footnotesize$210|01$}]{}}
{ (8,15)node(5f) [placeg,label=0:{\footnotesize$200|00$}]{}}

{ (-8,18)node(1g) [placeg,label=180:{\footnotesize$321|12$}]{}}
{ (-4,18)node(2g) [placer,label=180:{\footnotesize$320|02$}]{}}
{ (0,18)node(3g) [placeg,label=90:{\footnotesize$310|01$}]{}}
{ (4,18)node(4g) [placeg,label=0:{\footnotesize$300|00$}]{}}
{ (8,18)node(5g) [placeg,label=0:{\footnotesize$210|00$}]{}}

{  (-4,21)node(1h) [placeg,label=180:{\footnotesize$321|02$}]{}}
{ (0,21)node(2h) [placeg,label=90:{\footnotesize$320|01$}]{}}
{(4,21)node(3h) [placeg,label=0:{\footnotesize$310|00$}]{}}

{ (-4,24)node(1i) [placeg,label=180:{\footnotesize$321|01$}]{}}
{ (4,24)node(2i) [placeg,label=0:{\footnotesize$320|00$}]{}}

{ (0,27)node(1j) [placeg,label=90:{\footnotesize$321|00$}]{}};

 \draw (1a)--(1b); 
 \draw (1a)--(2b);

 \draw (1b)--(1c); 
 \draw (1b)--(2c);
 \draw (2b)--(2c);
 \draw (2b)--(3c);

 \draw (1c)--(1d); 
 \draw (1c)--(2d);
 \draw (1c)--(3d);
 \draw (2c)--(3d);
 \draw (2c)--(4d);
 \draw (3c)--(4d);
 \draw (3c)--(5d);

 \draw (1d)--(1e); 
 \draw (1d)--(2e);
 \draw (2d)--(1e);
 \draw (2d)--(3e);
 \draw (3d)--(2e);
 \draw (3d)--(3e);
 \draw (3d)--(4e);
 \draw (4d)--(4e);
 \draw (4d)--(5e);
 \draw (5d)--(5e);

 \draw (1e)--(1f); 
 \draw (1e)--(2f);
 \draw (2e)--(2f);
 \draw (2e)--(3f);
 \draw (3e)--(2f);
 \draw (3e)--(4f);
 \draw (4e)--(3f);
 \draw (4e)--(4f);
 \draw (4e)--(5f);
 \draw (5e)--(5f);

 \draw (1f)--(1g); 
 \draw (1f)--(2g);
 \draw (2f)--(2g);
 \draw (2f)--(3g);
 \draw (3f)--(3g);
 \draw (3f)--(4g);
 \draw (4f)--(3g);
 \draw (4f)--(5g);
 \draw (5f)--(4g);
 \draw (5f)--(5g);

 \draw (1g)--(1h); 
 \draw (2g)--(1h);
 \draw (2g)--(2h);
 \draw (3g)--(2h);
 \draw (3g)--(3h);
 \draw (4g)--(3h);
 \draw (5g)--(3h);

 \draw (1h)--(1i); 
 \draw (2h)--(1i);
 \draw (2h)--(2i);
 \draw (3h)--(2i);

 \draw (1i)--(1j); 
 \draw (2i)--(1j);

 \end{tikzpicture}

\end{center}

\end{example}

 Note that if we have a generic Boolean total map $A : S(n,r) \to {\bf 2}$ which has the properties (i), (ii) and (iii) of $A_f$, i.e. the following:

\begin{itemize}
\item[(BM1)] $A$ is order-preserving;\\
\item[(BM2)] If $w \in S(n,r)$ is such that $A(w)=N$, then  $A(w^c) =P;$ \\
\item[(BM3)] $A(10 \cdots 0|0 \cdots 0)=P$, $A(0 \cdots 0|0 \cdots 0)=N$ and $A(r \cdots 21|12 \cdots (n-r))=P;$
\end{itemize}

\noindent in general there does not exist a function $f\in WF(n,r)$ such that $A_f=A$ (see \cite{BisChias2} for a 
counterexample).

We denote by $\mathcal{W_+}(S(n,r),{\bf 2})$ the set of all the maps $A : S(n,r) \to {\bf 2}$ which satisfy (BM1) and (BM2) and by $\mathcal{W}_+(n,r)$ the subset of all the maps in $\mathcal{W_+}(S(n,r),{\bf 2})$ which satisfy also (BM3).  We also set $\gamma^{*} (n,r):=\min\{|S_{A}^{+}(n,r)|:A\in \mathcal{W}_{+}(n,r)\}$ and $\eta^{*} (n,r):=\max\{|S_{A}^{+}(n,r)|:A\in \mathcal{W}_{+}(n,r)\}$. Let us observe that $\gamma^{*}(n,r) \le \gamma(n,r) \le \eta(n,r) \le \eta^{*}(n,r)$. A natural question raises at this point :

\noindent $(Q)$:\,\,{\it If $q$ is an integer such that $\gamma^{*} (n,r) \le q \le \eta^{*}(n,r)$, does there exist a map $A \in \mathcal{W}_+(n,r)$ with the property that $|S_{A}^{+}(n,r)|=q$?}

\noindent The question $(Q)$ is the analogue of $(P2)$ expressed in terms of Boolean total maps on $S(n,r)$ instead of $(n,r)$-weight functions, and if we are able to respond to $(Q)$ we provide also a partial answer to $(P2)$. In section \ref{IntermediateValuesSection} we give an affirmative answer to the question $(Q)$ and also we give a constructive method to build the map $A$.

\section{Main results}\label{IntermediateValuesSection}

In the sequel of this paper we adopt the  classical terminology and notations usually used in the context of the partially ordered sets  (see \cite{dav-pri-2002} and \cite{stanley-vol1} for the general aspects on this subject). If $Z \subseteq S(n,r)$, we will set $\downarrow Z = \{ x \in S(n,r) : \exists \,\,\, z \in Z \,\,\, \textrm{such that} \,\,\, x \sqsubseteq z \} $, $\uparrow Z = \{ x \in S(n,r) : \exists \,\,\, z \in Z \,\,\, \textrm{such that} \,\,\, z \sqsubseteq x \}$. In particular, if $z \in S(n,r)$, we will set $\downarrow z =\downarrow \{z\} =\{ x \in S(n,r) : z \sqsupseteq x \}$, $\uparrow z =\uparrow \{z\} =\{ x \in S(n,r) : z \sqsubseteq x \}$. $Z$ is called a {\it down-set} of $S(n,r)$ if for $z \in Z$ and $x \in S(n,r)$ with $z \sqsupseteq x$, then $x \in Z$. $Z$ is called an {\it up-set} of $S(n,r)$ if for $z \in Z$ and $x\in S(n,r)$ with $z \sqsubseteq x$, then $x \in Z$. $\downarrow Z$ is the smallest down-set of $S(n,r)$ which contains $Z$ and $Z$ is a down-set of $S(n,r)$ if and only if $Z=\downarrow Z$. Similarly, $\uparrow Z$ is the smallest up-set of $S(n,r)$ which contains $Z$ and $Z$ is an up-set in $S(n,r)$ if and only if $Z=\uparrow Z$.

\hspace{1cm}

\begin{theorem}\label{firstTheorem}
If $n$ and $r$ are two integers such that $0<r<n$, then: $\gamma(n,r)= \gamma^{*}(n,r)= 2^{n-1}$ and $\eta(n,r)= \eta^{*}(n,r)= 2^{n}-2^{n-r}$.
\end{theorem}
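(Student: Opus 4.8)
The plan is to pin down all four quantities by means of the sandwich $\gamma^{*}(n,r)\le\gamma(n,r)\le\eta(n,r)\le\eta^{*}(n,r)$ recorded above: I will bound the two starred quantities (taken over all of $\mathcal{W}_{+}(n,r)$) from the outside using only the abstract axioms (BM1)--(BM3), and bound the ordinary $\gamma,\eta$ from the inside by exhibiting two explicit weight functions. Since each chain then lies between equal endpoints, all four values collapse to the claimed ones.

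For the minimum I would first prove $\gamma^{*}(n,r)\ge 2^{n-1}$. The complementation $c$ is an involution of $S(n,r)$ with no fixed point (a subset of $I(n,r)$ never equals its complement), so it partitions the $2^{n}$ elements of $S(n,r)$ into $2^{n-1}$ pairs $\{w,w^{c}\}$. By (BM2) the elements $w$ and $w^{c}$ are never both sent to $N$, so each pair contributes at least one element to $S_{A}^{+}(n,r)$; summing over the $2^{n-1}$ pairs gives $|S_{A}^{+}(n,r)|\ge 2^{n-1}$ for every $A\in\mathcal{W}_{+}(n,r)$. To match this from inside I would exhibit $f\in WF(n,r)$ with $\alpha(f)=2^{n-1}$: take weights with total sum exactly $0$ and otherwise generic, so that no proper nonempty subset sums to $0$. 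Then for each pair $\{w,w^{c}\}$ with $w$ neither empty nor full the sums $\Sigma_{f}(w)$ and $\Sigma_{f}(w^{c})=-\Sigma_{f}(w)$ have opposite nonzero signs, so exactly one is $P$; the remaining pair also contributes exactly one $P$ (the full string, of sum $0$). This yields $|S_{A_{f}}^{+}(n,r)|=2^{n-1}$ and hence $\gamma(n,r)\le 2^{n-1}$, closing the chain $2^{n-1}\le\gamma^{*}\le\gamma\le 2^{n-1}$.

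For the maximum, the cleanest step---and, I expect, the crux---is the bound $\eta^{*}(n,r)\le 2^{n}-2^{n-r}$. The key observation is that the empty set corresponds to the string $0\cdots0|0\cdots0$, which is \emph{not} the bottom of the lattice: its principal down-set $\downarrow(0\cdots0|0\cdots0)$ consists exactly of the strings $0\cdots0|j_{1}\cdots j_{n-r}$ with empty positive part, of which there are precisely $2^{n-r}$. Since $A(0\cdots0|0\cdots0)=N$ by (BM3) and the set of $N$-valued strings is a down-set by (BM1), all $2^{n-r}$ of these strings are forced to $N$, whence $|S_{A}^{+}(n,r)|\le 2^{n}-2^{n-r}$ for every $A\in\mathcal{W}_{+}(n,r)$. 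For the matching inside bound I would exhibit $f\in WF(n,r)$ in which every positive weight exceeds the sum of the absolute values of all negative weights; then every subset meeting the positive part has strictly positive sum (value $P$), while the $2^{n-r}$ subsets contained in the negative part give sum $\le 0$ (value $N$, the empty set included by convention). This produces $|S_{A_{f}}^{+}(n,r)|=2^{n}-2^{n-r}$, so $\eta(n,r)\ge 2^{n}-2^{n-r}$, and the chain $2^{n}-2^{n-r}\le\eta\le\eta^{*}\le 2^{n}-2^{n-r}$ forces equality throughout.

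The routine parts are verifying that the two constructed functions genuinely lie in $WF(n,r)$ (correct number of non-negative entries, decreasing arrangement, non-negative total) together with the genericity used in the first construction; the conceptual heart is the two abstract inequalities, and in particular the realization that the constraint forcing $2^{n-r}$ values of $N$ comes from the down-set below the empty string rather than below the lattice minimum.
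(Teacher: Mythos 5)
Your proposal is correct and follows the same skeleton as the paper's proof: both pin down all four quantities through the chain $\gamma^{*}(n,r)\le\gamma(n,r)\le\eta(n,r)\le\eta^{*}(n,r)$, with the outer bounds proved abstractly from (BM1)--(BM3) and the inner bounds realized by explicit weight functions; in particular your treatment of the maximum (the $2^{n-r}$ strings below $0\cdots0|0\cdots0$, which is not the lattice bottom, are forced to $N$ by (BM1) and (BM3), and a dominant-positives function attains $2^{n}-2^{n-r}$) coincides with the paper's. The two genuine differences are on the minimum side. First, the paper never writes out the bound $\gamma^{*}(n,r)\ge 2^{n-1}$: it only asserts that it ``easily follows'' by a technique similar to that of Theorem 1 of \cite{ManMik87}; the argument you give---$w\mapsto w^{c}$ is a fixed-point-free involution of the $2^{n}$-element set $S(n,r)$, and (BM2) forbids both members of a pair $\{w,w^{c}\}$ taking the value $N$---is exactly that technique, so your write-up is the more self-contained one. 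Second, for $\gamma(n,r)\le 2^{n-1}$ the paper uses not a generic zero-sum function but the explicit $f$ taking the value $n-r$ at each positive index, $-1$ at $\overline{1},\dots,\overline{n-r-1}$ and $(n-r)(1-r)-1$ at $\overline{n-r}$, and computes that its $P$-set is exactly $S_{2}^{\pm}(n,r)\cup S_{1}^{+}(n,r)\cup S_{2}^{+}(n,r)$, of size $2^{n-1}$. Your generic construction is slicker and dodges that computation; what the paper's structured choice buys is that the maps $A_{f}$ and $A_{g}$, whose $P$-sets are unions of the six canonical sublattices $S_{i}^{+},S_{i}^{\pm},S_{i}^{-}$, are reused verbatim as the endpoint cases $p=0$ and $p=2^{n-1}-2^{n-r}$ in the proof of Theorem \ref{mainResult}, for which that six-block partition is the scaffolding. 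Logically your endpoints would serve just as well there, so nothing is lost; and the genericity existence you defer (zero total sum, no proper nonempty subset summing to zero, with the prescribed sign pattern) is indeed routine.
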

\begin{proof} Assume that $0<r<n$. We denote by $S_1 (n,r)$ the sublattice of $S(n,r)$ of all the strings $w$ of the form
$ w=i_1 \cdots i_r | j_1 \cdots j_{n-r-1} (n-r)$, with $j_1 \cdots j_{n-r-1} \in \{ 0^§, \overline{1}, \cdots, \overline{n-r-1} \}$ and  by $S_2 (n,r)$ the sublattice of $S(n,r)$ of all the strings $w$ of the form
$ w=i_1 \cdots i_r |0 j_2 \cdots j_{n-r}$, with $j_2 \cdots j_{n-r} \in \{ 0^§, \overline{1}, \cdots, \overline{n-r-1} \}$. It is clear that $S(n,r)=S_1(n,r)\dot\bigcup S_2(n,r)$ and  $S_1(n,r) \cong S_2(n,r) \cong S(n-1,r)$.
\noindent We consider now the following further sublattices of $S(n,r)$:

\noindent  $S_1^+(n,r):=\left\{ w\in S_{1}(n,r):w=r(r-1)\ldots 21| j_{1}\ldots j_{n-r-1}(n-r)\right\}$

\noindent $S_1^{\pm}(n,r):=\left\{ w\in S_{1}(n,r):w=i_{1}\ldots i_{r-1}0| j_{1}\ldots j_{n-r-1}(n-r),i_{1}\succ0^§\right\} $

 \noindent $S_1^-(n,r):=\left\{ w\in S_{1}(n,r):w=0\ldots0| j_{1}\ldots j_{n-r-1}(n-r)\right\} $

\noindent $S_2^+(n,r):=\left\{ w\in S_{2}(n,r):w=r(r-1)\ldots 21| 0 j_{2}\ldots j_{n-r}\right\}$

\noindent $S_2^{\pm}(n,r):=\left\{ w\in S_{2}(n,r):w=i_{1}\ldots i_{r-1}0|0j_{2}\ldots j_{n-r},i_{1}\succ 0^§\right\}$

 \noindent $S_2^-(n,r):=\left\{ w\in S_{2}(n,r):w=0\ldots0| 0j_{2}\ldots j_{n-r}\right\} $

\noindent It occurs immediately that: $S_i(n,r)=S_i^+(n,r){\dot{\bigcup}}S_i^{\pm}(n,r)\dot{\bigcup}S_i^-(n,r)$ , for $i=1,2$ and $S_i^{\pm}(n,r)$ is a distributive sublattice of $S_i(n,r)$ with $2^{n-1}-2\cdot2^{n-r-1}=2^{n-1}-2^{n-r}$ elements, for $i=1,2$.

\noindent Now we consider the following $(n,r)$-weight function $f:A(n,r)\rightarrow\mathbb{R}$

 \begin{equation*}
 f:\begin{array}{cccccccc}
\widetilde{r} & \ldots & \widetilde{1} & 0^§ & \overline{1} & \ldots & \overline{(n-r-1)} & \overline{(n-r)}\\
\downarrow & \downarrow & \downarrow & \downarrow & \downarrow & \downarrow & \downarrow & \downarrow\\
(n-r) & \ldots & (n-r) & 0 & -1 & \ldots & -1 & (n-r)(1-r)-1
\end{array}
\end{equation*}

\noindent  Then it follows that $\Sigma_{f}:S(n,r)\rightarrow\mathbb{R}$ is such that

 \begin{equation*}
 \Sigma_{f}(w)\begin{cases}
\begin{array}{c}
\geq0\\
<0\end{array} & \begin{array}{c}
{\rm if}\,\, w\in S_2^{\pm}(n,r)\\
{\rm if}\,\, w\in S_1^{\pm}(n,r)\end{array}\end{cases}
 \end{equation*}

\noindent It means that the boolean map $A_{f}\in \mathcal{W}_{+}(n,r)$ is such that:

\begin{equation*}
A_{f}(w)=\begin{cases}
\begin{array}{c}
P\\
N\end{array} & \begin{array}{c}
{\rm if}\,\, w\in S_2^{\pm}(n,r)\\
{\rm if}\,\, w\in S_1^{\pm}(n,r)\end{array}\end{cases}
\end{equation*}

\noindent This shows that: $\left|S_{A_{f}}^{+}(n,r)\right|=\left|S_1^+(n,r)\dot{\bigcup}S_2^+(n,r)\dot{\bigcup}S_2^{\pm}(n,r)\right|= 2^{n-r-1}+2^{n-r-1}+2^{n-1}-2\cdot2^{n-r-1}=2^{n-1}$.

\noindent In \cite{ManMik87} (Theorem 1) it has been proved that $\gamma(n,1)=2^{n-1}$  and  $\gamma(n,r)\geq2^{n-1}$. Since $\gamma(n,r) \ge \gamma^{*}(n,r)$, using a technique similar to that used in the proof of the theorem 1 of \cite{ManMik87}, it easily follows that $\gamma^{*}(n,r)\geq2^{n-1}$. As shown above, it results that $|S_{A_f}^{+}(n,r)|=2^{n-1}$. Hence $2^{n-1}\ge\gamma(n,r)\geq\gamma^{*}(n,r)\geq2^{n-1}$, i.e. $\gamma(n,r)=\gamma^{*}(n,r)=2^{n-1}$. This prove the first part of theorem, it remains to prove the latter part.

We consider now the following $(n,r)-$positive weight function $g:A(n,r)\rightarrow\mathbb{R}$:

 \begin{equation*}
g: \begin{array}{cccccccc}
\widetilde{r} & \ldots & \widetilde{1} & 0^§ & \overline{1} & \ldots & \overline{(n-r-1)} & \overline{(n-r)}\\
\downarrow & \downarrow & \downarrow & \downarrow & \downarrow & \downarrow & \downarrow & \downarrow\\
1 & \ldots & 1 & 0 & \frac{-1}{n-r} & \ldots & \frac{-1}{n-r} & \frac{-1}{n-r}
\end{array}
\end{equation*}

\noindent It results then that the sum $\Sigma_{g}:S(n,r)\rightarrow\mathbb{R}$ is such that $\Sigma_{g}(w)\ge 0 \,\,{\rm if}\,\, w \in S_1^{\pm}(n,r)\dot{\bigcup} S_2^{\pm}(n,r)$, i.e. the boolean map $A_{g}\in \mathcal{W}_{+}(n,r)$ is such that $A_{g}(w)=P \,\,{\rm if}\,\, w\in S_1^{\pm}(n,r)\dot{\bigcup} S_2^{\pm}(n,r)$. This shows that $\left|S_{A_{g}}^{+}(n,r)\right|=\\
\left|S_1^{\pm}(n,r)\dot{\bigcup}S_2^{\pm}(n,r)\dot{\bigcup}S_1^+(n,r)\dot{\bigcup}S_2^+(n,r)\right|=
(2^{n-1}-2^{n-r})+(2^{n-1}-2^{n-r})+2^{n-r-1}+2^{n-r-1}=2^{n}-2^{n-r}$.

\noindent On other hand, it is clear that for any $ A\in \mathcal{W}_+(n,r)$ it results $A(w)=P$ for each $w\in S_1^+(n,r)\dot{\bigcup}S_2^+(n,r)$ and $A(w)=N$ for each $w\in S_1^-(n,r)\dot{\bigcup}S_2^-(n,r)$. The number $(2^{n}-2^{n-r})$ is the biggest number of values $P$ that a boolean map $A\in \mathcal{W}_{+}(n,r)$ can assume. Hence, since $\eta(n,r) \le \eta^{*}(n,r)$, we have $2^{n}-2^{n-r} = \left|S_{A_g}^{+}(n,r)\right| \le \eta(n,r) \le \eta^{*}(n,r) \le 2^{n}-2^{n-r}$, i.e. $\eta(n,r) = \eta^{*}(n,r) = 2^{n}-2^{n-r}$. This conclude the proof of the Theorem \ref{firstTheorem}.
\end{proof}

\noindent To better visualize the previous result, we give a numerical example on a specific Hasse diagram. Let $n=6$ and $r=2$ and let $f$ as given in previous theorem, i.e.

\begin{equation*}
 f:\begin{array}{cccccccc}
\widetilde{2} & \widetilde{1} & 0^§           & \overline{1} & \overline{2} & \overline{3}  & \overline{4}\\
\downarrow    & \downarrow    &    \downarrow & \downarrow   & \downarrow     & \downarrow    & \downarrow  \\
    4         &     4         & 0             &      -1            & -1             & -1        & -5
\end{array}
\end{equation*}

\noindent In the figure below we have shown the Hasse diagram of the lattice S(6,2), where $S_1^+(n,r)$ is black; $S_1^{\pm}(n,r)$ is violet; $S_1^-(n,r)$ is red; $S_2^+(n,r)$ is blue; $S_2^{\pm}(n,r)$ is brown; $S_2^-(n,r)$ is green. Therefore $A_{f}$ assume the following values:
\begin{itemize}
  \item the blue, black and brown nodes correspond to values P of $A_{f}$
  \item the violet, red and green nodes correspond to values N of $A_{f}$.
\end{itemize}
\newpage
\begin{tikzpicture}
 [inner sep=0.5mm,
 place/.style={circle,draw=black!100,fill=black!100,thick},
placer/.style={circle,draw=red!100,fill=red!100,thick},
placeb/.style={circle,draw=blue!100,fill=blue!100,thick},
placeg/.style={circle,draw=green!100,fill=green!100,thick},
placep/.style={circle,draw=violet!100,fill=violet!100,thick},
placem/.style={circle,draw=brown!100,fill=brown!100,thick},scale=0.35]scale=0.35]
 \path
 (0,0)node(a1) [placer,label=270:{\footnotesize$00|1234$}]{}

 (4,4)node(b1) [placer,label=0:{\footnotesize$00|0234$}]{}
 (-4,4)node(b2) [placep,label=180:{\footnotesize$10|1234$}]{}
 (-4,8)node(c1) [placep,label=180:{\footnotesize$20|1234$}]{}
 (0,8)node(c2) [placep,label=180:{\footnotesize$10|0234$}]{}
 (4,8)node(c3) [placer,label=0:{\footnotesize$00|0134$}]{}
 (-8,12)node(d1) [place,label=180:{\footnotesize$21|1234$}]{}
 (-4,12)node(d2) [placep,label=180:{\footnotesize$20|0234$}]{}
 (0,12)node(d3) [placep,label=180:{\footnotesize$10|0134$}]{}
 (4,12)node(d4) [placer,label=180:{\footnotesize$00|0034$}]{}
 (8,12)node(d5) [placer,label=0:{\footnotesize$00|0124$}]{}
 (-12,16)node(e1) [place,label=180:{\footnotesize$21|0234$}]{}
 (-8,16)node(e2) [placep,label=180:{\footnotesize$20|0134$}]{}
 (-4,16)node(e3) [placep,label=180:{\footnotesize$10|0034$}]{}
 (4,16)node(e4) [placep,label=180:{\footnotesize$10|0124$}]{}
 (8,16)node(e5) [placer,label=180:{\footnotesize$00|0024$}]{}
 (12,16)node(e6) [placeg,label=0:{\footnotesize$00|0123$}]{}
 (-12,20)node(f1) [place,label=180:{\footnotesize$21|0134$}]{}
 (-8,20)node(f2) [placep,label=180:{\footnotesize$20|0034$}]{}
 (-4,20)node(f3) [placep,label=180:{\footnotesize$20|0124$}]{}
 (0,20)node(f4) [placep,label=180:{\footnotesize$10|0024$}]{}
 (4,20)node(f5) [placer,label=180:{\footnotesize$00|0014$}]{}
 (8,20)node(f6) [placem,label=180:{\footnotesize$10|0123$}]{}
 (12,20)node(f7) [placeg,label=0:{\footnotesize$00|0023$}]{}
 (-16,24)node(g1) [place,label=180:{\footnotesize$21|0034$}]{}
(-12,24) node(g2) [place,label=180:{\footnotesize$21|0124$}]{}
(-8,24)node(g3) [placep,label=180:{\footnotesize$20|0024$}]{}
(-4,24)node(g4) [placep,label=180:{\footnotesize$10|0014$}]{}
(4,24)node(g5) [placer,label=180:{\footnotesize$00|0004$}]{}
(8,24)node(g6) [placem,label=180:{\footnotesize$20|0123$}]{}
(12,24)node(g7) [placem,label=180:{\footnotesize$10|0023$}]{}
(16,24)node(g8) [placeg,label=0:{\footnotesize$00|0013$}]{}
(-16,28)node(h1) [place,label=180:{\footnotesize$21|0024$}]{}
(-12,28) node(h2) [placep,label=180:{\footnotesize$20|0014$}]{}
(-8,28)node(h3) [placep,label=180:{\footnotesize$10|0004$}]{}
(-4,28)node(h4) [placeb,label=180:{\footnotesize$21|0123$}]{}
(4,28)node(h5) [placem,label=180:{\footnotesize$20|0023$}]{}
(8,28)node(h6) [placem,label=180:{\footnotesize$10|0013$}]{}
(12,28)node(h7) [placeg,label=180:{\footnotesize$00|0003$}]{}
(16,28)node(h8) [placeg,label=0:{\footnotesize$00|0012$}]{}
(-12,32)node(i1) [place,label=180:{\footnotesize$21|0014$}]{}
 (-8,32)node(i2) [placep,label=180:{\footnotesize$20|0004$}]{}
 (-4,32)node(i3) [placeb,label=180:{\footnotesize$21|0023$}]{}
 (0,32)node(i4) [placem,label=180:{\footnotesize$20|0013$}]{}
 (4,32)node(i5) [placem,label=180:{\footnotesize$10|0003$}]{}
 (8,32)node(i6) [placem,label=180:{\footnotesize$10|0012$}]{}
 (12,32)node(i7) [placeg,label=0:{\footnotesize$00|0002$}]{}
 (-12,36)node(l1) [place,label=180:{\footnotesize$21|0004$}]{}
 (-8,36)node(l2) [placeb,label=180:{\footnotesize$21|0013$}]{}
 (-4,36)node(l3) [placem,label=180:{\footnotesize$20|0003$}]{}
 (4,36)node(l4) [placem,label=180:{\footnotesize$20|0012$}]{}
 (8,36)node(l5) [placem,label=180:{\footnotesize$10|0002$}]{}
 (12,36)node(l6) [placeg,label=0:{\footnotesize$00|0001$}]{}
(-8,40)node(m1) [placeb,label=180:{\footnotesize$21|0003$}]{}
 (-4,40)node(m2) [placeb,label=180:{\footnotesize$21|0012$}]{}
 (0,40)node(m3) [placem,label=180:{\footnotesize$20|0002$}]{}
 (4,40)node(m4) [placem,label=180:{\footnotesize$10|0001$}]{}
 (8,40)node(m5) [placeg,label=0:{\footnotesize$00|0000$}]{}
 (-4,44)node(n1) [placeb,label=180:{\footnotesize$21|0002$}]{}
 (0,44)node(n2) [placem,label=180:{\footnotesize$20|0001$}]{}
 (4,44)node(n3) [placem,label=0:{\footnotesize$10|0000$}]{}
 (4,48)node(p2) [placem,label=0:{\footnotesize$20|0000$}]{}
 (-4,48)node(p1) [placeb,label=180:{\footnotesize$21|0001$}]{}
 (0,52)node(q1) [placeb,label=180:{\footnotesize$21|0000$}]{}
;
\draw[red] (a1)--(b1)--(c3)--(d5)--(e5)--(f5)--(g5);
\draw[red] (c3)--(d4)--(e5);

\draw[violet] (b2)--(c1)--(d2)--(e2)--(f2)--(g3)--(h2)--(i2);
\draw[violet] (b2)--(c2)--(d3)--(e3)--(f4)--(g4)--(h3)--(i2);
\draw[violet] (c2)--(d2);
\draw[violet] (d3)--(e2)--(f3)--(g3);
\draw[violet] (d3)--(e4)--(f4);
\draw[violet] (e3)--(f2);
\draw[violet] (e4)--(f3);
\draw[violet] (c2)--(d2);
\draw[violet] (f4)--(g3);
\draw[violet] (g4)--(h2);

\draw[blue] (q1)--(p1)--(n1)--(m1)--(l2)--(i3)--(h4);
\draw[blue] (n1)--(m2)--(l2);

\draw[brown] (p2)--(n3)--(m4)--(l5)--(h6)--(i6);
\draw[brown] (p2)--(n2)--(m3)--(l4)--(i4)--(h5)--(g6)--(f6);
\draw[brown] (n2)--(m4);

\draw[brown] (m3)--(l3)--(i5)--(h6);
\draw[brown] (l4)--(i4)--(h6);
\draw[brown] (m3)--(l5)--(i5);
\draw[brown] (l3)--(i4);
\draw[brown] (h5)--(g7)--(f6);
\draw[brown] (g7)--(h6);

\draw (d1)--(e1)--(f1)--(g1)--(h1)--(i1)--(l1);
\draw (f1)--(g2)--(h1);

\draw[green] (e6)--(f7)--(g8)--(h8)--(i7)--(l6)--(m5);
\draw[green](g8)--(h7)--(i7);

\draw[gray] (c1)--(d1);
\draw[gray] (d2)--(e1) ;
\draw[gray] (e5)--(f7);
\draw[gray] (d5)--(e6)--(f6) ;
\draw[gray] (e4)--(f6);
\draw[gray] (e2)--(f1) ;
\draw[gray] (f3)--(g2);
\draw[gray] (f2)--(g1) ;
\draw[gray] (f3)--(g6);
\draw[gray] (f4)--(g7) ;
\draw[gray] (f5)--(g8)--(h6);
\draw[gray] (f7)--(g7) ;
\draw[gray] (g3)--(h1) ;
\draw[gray] (g2)--(h4);
\draw[gray] (g3)--(h5) ;
\draw[gray] (g4)--(h6);
\draw[gray] (g8)--(h6) ;
\draw[gray] (h1)--(i3);
\draw[gray] (h2)--(i1) ;
\draw[gray] (h2)--(i4);
\draw[gray] (h3)--(i5) ;
\draw[gray] (h7)--(i5);
\draw[gray] (h8)--(i6) ;
\draw[gray] (i2)--(l1)--(m1);
\draw[gray] (i1)--(l2) ;
\draw[gray] (i2)--(l3);
\draw[gray] (i7)--(l5) ;
\draw[gray] (l6)--(m4);
\draw[gray] (m5)--(n3) ;
\draw[gray] (a1)--(b2);
\draw[gray] (b1)--(c2);
\draw[gray] (c3)--(d3);
\draw[gray] (d4)--(e3);
\draw[gray] (d5)--(e4);
\draw[gray] (f5)--(g4);
\draw[gray] (g5)--(h3);
\draw[gray] (q1)--(p2);
\draw[gray] (p1)--(n2);
\draw[gray] (n1)--(m3);
\draw[gray] (m2)--(l4);
\draw[gray] (m1)--(l3);
\draw[gray] (g6)--(h4);
\draw[gray] (l4)--(i6);
\draw[gray] (i3)--(h5);
\draw[gray] (g5)--(h7);

 \end{tikzpicture}

\newpage


 First to give the proof of the Theorem \ref{mainResult} we need to introduce some useful results and the concept of basis in $S(n,r)$.
\noindent In the following first lemma we show some properties of the sublattices of $S(n,r)$.
\begin{lemma}\label{propertiesSublattices}
Here hold the following properties, where $\theta= 00 \cdots 0|0 \cdots 0$ and $\Theta= r\cdots21|12\cdots(n-r)$
\begin{itemize}

\item[i)] $\uparrow \Theta=S_{1}^{+}(n,r)\dot{\bigcup} S_{2}^{+}(n,r)$\\
\item[ii)] $\downarrow \theta=S_{1}^{-}(n,r)\dot{\bigcup} S_{2}^{-}(n,r)$\\
\item[iii)] $\uparrow S_{2}^{\pm}(n,r)\subseteq S_{2}^{\pm}(n,r)\dot{\bigcup} S_{2}^{+}(n,r)$\\
\item[iv)] $\downarrow S_{1}^{\pm}(n,r)\subseteq S_{1}^{\pm}(n,r)\dot{\bigcup} S_{1}^{-}(n,r) $\\
\item[v)] $((S_{1}^{\pm}(n,r))^{c}=S_{2}^{\pm}(n,r)$
\end{itemize}
\end{lemma}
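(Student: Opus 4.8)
The plan is to reduce every statement to a single bookkeeping device: encode each string $w=i_1\cdots i_r\mid j_1\cdots j_{n-r}$ by (a) the \emph{shape of its positive part} (empty, proper--nonempty, or full) and (b) whether the $\prec$-least index $\overline{n-r}$ occurs in it (which is exactly the $S_1$ versus $S_2$ dichotomy). Because the entries of a string are weakly decreasing and only the neutral symbol (I write it $0$, as in the numerical examples) may repeat, these data are read off from boundary entries. First I would record the three elementary observations: the positive part is empty iff $i_1=0$ and full iff $i_r\neq 0$, hence proper and nonempty iff $i_1\neq 0$ and $i_r=0$; and $\overline{n-r}$ occurs iff $j_{n-r}=\overline{n-r}$ (being $\prec$-least, if present it must sit in the last slot), i.e.\ iff $w\in S_1(n,r)$, while $j_{n-r}\succ\overline{n-r}$ characterises $S_2(n,r)$. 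With this dictionary the six sublattices are precisely ``shape $=+/\pm/-$'' intersected with ``$S_1$ or $S_2$''.

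For (i) and (ii) I would exploit that the boundary entries of $\Theta=r\cdots21\mid12\cdots(n-r)$ and $\theta=0\cdots0\mid0\cdots0$ are extremal. The $i$-entries of $\Theta$ are the componentwise-largest admissible positive part and the $j$-entries of $\Theta$ are the componentwise-smallest admissible negative part; dually for $\theta$. Thus $w\sqsupseteq\Theta$ forces $i_k(w)=i_k(\Theta)$ for every $k$, i.e.\ a full positive part, while imposing nothing on the $j$-entries; this is exactly ``full positive part with arbitrary $j$-part'', namely $S_1^+(n,r)\dot{\bigcup}S_2^+(n,r)$. Dually $w\sqsubseteq\theta$ forces the positive part empty with arbitrary $j$-part, i.e.\ $S_1^-(n,r)\dot{\bigcup}S_2^-(n,r)$.

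Statements (iii) and (iv) are then just monotonicity of these boundary indicators along $\sqsubseteq$. If $w\in S_2^{\pm}(n,r)$ and $u\sqsupseteq w$, then $i_1(u)\succeq i_1(w)\neq 0$ keeps the positive part nonempty and $j_{n-r}(u)\succeq j_{n-r}(w)\succ\overline{n-r}$ keeps $\overline{n-r}$ absent, so $u\in S_2^{\pm}(n,r)\dot{\bigcup}S_2^+(n,r)$. Dually, if $w\in S_1^{\pm}(n,r)$ and $u\sqsubseteq w$, then $i_r(u)\preceq i_r(w)=0$ keeps the positive part non-full and $j_{n-r}(u)\preceq j_{n-r}(w)=\overline{n-r}$ keeps $\overline{n-r}$ present, so $u\in S_1^{\pm}(n,r)\dot{\bigcup}S_1^-(n,r)$. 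For (v) I would apply the explicit complement $c$: a generic $w\in S_1^{\pm}(n,r)$ has positive part a proper nonempty set $\{p_1,\dots,p_k\}$ with $1\le k\le r-1$ and has $\overline{n-r}$ among its negative indices; then $w^c$ has positive part the set-complement (still proper nonempty, since $1\le r-k\le r-1$) and negative indices the complement in $\{\overline1,\dots,\overline{n-r}\}$, which now omits $\overline{n-r}$. Hence $w^c\in S_2^{\pm}(n,r)$, giving $(S_1^{\pm}(n,r))^c\subseteq S_2^{\pm}(n,r)$, and equality follows since $c$ is an involution (or by the symmetric inclusion).

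The only genuine content, and the step I expect to be the main obstacle, is justifying the dictionary used in (i)--(ii): that the boundary entries of $\Theta$ and $\theta$ really are extremal over \emph{all} admissible strings. The subtle direction is that $j_l\succeq\overline{l}$ holds for every admissible string, which is forced by the $n-r-l$ strictly smaller (hence distinct negative) entries that must follow $j_l$; symmetrically $i_k\preceq\widetilde{\,r-k+1\,}$ is forced by the $k-1$ strictly larger entries preceding $i_k$. Once this monotone behaviour of the leading $i$-entries and the trailing $j$-entry is established, all five assertions collapse to one-line comparisons, so I would isolate it as a preliminary observation and then dispatch (i)--(v) in turn.
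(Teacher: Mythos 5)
Your proof is correct and takes essentially the same route as the paper's: a componentwise comparison of string entries, with i), ii) and v) argued exactly as in the paper. The only cosmetic difference is in iii) and iv), where the paper reduces to the principal up-set of the minimum $\alpha=10\ldots0|01\ldots(n-r-1)$ of $S_2^{\pm}(n,r)$ (resp.\ the principal down-set of the maximum $t_1$ of $S_1^{\pm}(n,r)$), while you argue directly via monotonicity of the boundary entries $i_1$, $i_r$, $j_{n-r}$ --- the underlying computation is identical, and your explicit preliminary observation that $j_l\succeq\overline{l}$ and $i_k\preceq\widetilde{r-k+1}$ for all admissible strings merely makes precise what the paper leaves implicit.
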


\begin{proof}
i) If $w\in (\uparrow \Theta)$ then $\Theta\sqsubseteq w$, i.e. it has the form $w=r\ldots1|j_{1}\ldots j_{n-r}$, where $j_1 \cdots j_{n-r} \in \{ 0^§, \overline{1}, \cdots, \overline{n-r} \}$; therefore $w\in S_{1}^{+}(n,r)\dot{\bigcup} S_{2}^{+}(n,r) $. If $w\in S_{1}^{+}(n,r)$, it has the form $w=r\ldots1|j_{1}\ldots j_{n-r-1}(n-r)$, where $j_1 \cdots j_{n-r-1} \in \{ 0^§, \overline{1}, \cdots, \overline{n-r-1} \}$; if $w\in S_{2}^{+}(n,r)$, it has the form $w=r\ldots1|0j_{2}\ldots j_{n-r}$, where $j_2 \cdots j_{n-r} \in \{ 0^§, \overline{1}, \cdots, \overline{n-r-1} \}$. In both cases it results that $\Theta\sqsubseteq w$, i.e. $w\in (\uparrow \Theta) $. ii) It is analogue to i).


\noindent iii) The minimum of the sublattice $S_2^{\pm}(n,r)$ is  $\alpha=10\ldots0|01\ldots(n-r-1)$; since $\uparrow S_{2}^{\pm}(n,r)\subseteq \uparrow \alpha$, it is sufficient to show that $\uparrow\alpha=S_{2}^{\pm}(n,r)\bigcup S_{2}^{+}(n,r)$. The inclusion $S_{2}^{\pm}(n,r)\bigcup S_{2}^{+}(n,r) \subseteq\uparrow\alpha$ follows by the definition of $S_{2}^{\pm}(n,r)$ and of $S_{2}^{+}(n,r)$. On the other side, if $w\in \uparrow\alpha$, it follows that $\alpha\sqsubseteq w$, i.e. $w=i_{1}\ldots i_{r}|0j_{2}\ldots j_{n-r}$, with $i_1 \succ 0^§$ and $j_2 \cdots j_{n-r} \in \{ 0^§, \overline{1}, \cdots, \overline{n-r-1} \}$. Therefore $w\in S_{2}^{\pm}(n,r)\bigcup S_{2}^{+}(n,r) $, and this proves the other inclusion.

\noindent iv) Let us consider the maximum of the sublattice $S_1^{\pm}(n,r)$, that is $t_{1}=r(r-1)\ldots20|0\ldots0(n-r)$. Since $\downarrow S_{1}^{\pm}(n,r)\subseteq\downarrow\beta$, it is sufficient to show that $\downarrow t_{1}=S_{1}^{\pm}(n,r)\bigcup S_{1}^{-}(n,r)$; this proof is similar to iii).

\noindent v) If $w\in S_{1}^{\pm}(n,r)$, it has the form $w=i_{1}\ldots i_{r-1}0|j_{1}\ldots j_{n-r-1}(n-r)$,  with $i_{1} \succ 0^§$ and $j_1 \cdots j_{n-r-1} \in \{ 0^§, \overline{1}, \cdots, \overline{n-r-1} \}$, therefore its complement has the form $w^{c}=i_{1}'\ldots i_{r-1}'0|0j_{2}'\ldots j_{n-r}'$, with $i_{1}' \succ 0^§$ and $j_2' \cdots j_{n-r}' \in \{ 0^§, \overline{1}, \cdots, \overline{n-r-1} \}$; hence $w^{c}\in S_{2}^{\pm}(n,r)$. This shows that $(S_{1}^{\pm}(n,r))^{c} \subseteq S_{2}^{\pm}(n,r)$. Now, if $w\in S_{2}^{\pm}(n,r)$, we write $w$ in the form $(w^c)^c$; then $w^c\in (S_{2}^{\pm}(n,r))^{c} \subseteq S_{1}^{\pm}(n,r)$, therefore $w\in (S_{1}^{\pm}(n,r))^{c}$. This shows that $ S_{2}^{\pm}(n,r) \subseteq (S_{1}^{\pm}(n,r))^{c} $.
\end{proof}

At this point let us recall the definition of basis for $S(n,r)$, as given in \cite{BisChias2} in a more general context. In the same way as an anti-chain uniquely determines a Boolean order-preserving map, a basis uniquely determines a Boolean map that has the properties (BM1) and (BM2) (see \cite{BisChias2} for details). Hence the concept of basis will be fundamental in the sequel of this proof.
\begin{definition}\label{def base S(n,r)}
A basis for $S(n,r)$ is an ordered couple $\langle Y_+ | Y_{-} \rangle$, where $Y_{+}$ and $Y_{-}$ are two disjoint anti-chains of $S(n,r)$ such that :
\begin{itemize}
\item[B1)] $(\downarrow Y_{+}) \bigcap (Y_{-}^c) = \emptyset;$ \\
\item[B2)] $((\uparrow Y_{+}) \bigcup \uparrow (Y_{-}^c)) \bigcap \downarrow Y_{-} = \emptyset;$ \\
\item[B3)] $S(n,r)=((\uparrow Y_{+}) \bigcup \uparrow (Y_{-}^c)) \bigcup \downarrow Y_{-}.$\\
\end{itemize}
\end{definition}

\noindent In the proof of Theorem \ref{mainResult} we will construct explicitly a such basis.

We will also use the following result that was proved in \cite{BisChias2}.
\begin{lemma}\label{basisKer(A)}
Let $\langle W_+ | W_{-} \rangle$ be a basis for $S(n,r)$. Then the map
$$A(x)=\left\{ \begin{array}{lll}
                 P & \textrm{if} & x \in \uparrow (W_+) \bigcup \uparrow (W_{-}^c) \\
                 N & \textrm{if} & x \in \downarrow W_{-}
                 \end{array} \right.$$
is such that $A \in \mathcal{W}_+(S(n,r), {\bf 2})$.
\end{lemma}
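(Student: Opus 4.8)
The plan is to verify directly the two defining conditions (BM1) and (BM2) of $\mathcal{W}_{+}(S(n,r),{\bf 2})$, after first checking that $A$ is a genuine total map $S(n,r)\to{\bf 2}$. Write $U:=\uparrow (W_{+})\,\bigcup\,\uparrow (W_{-}^{c})$ for the prescribed $P$-region and $D:=\downarrow W_{-}$ for the $N$-region. The point is that the three basis axioms split their roles cleanly: B3 and B2 take care of well-definedness, the very shape of $U$ forces order-preservation, and the behaviour of the complementation $c$ forces (BM2).

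First I would settle well-definedness together with (BM1). Axiom B3 states $S(n,r)=U\bigcup D$, so $A(x)$ is defined for every $x\in S(n,r)$, while axiom B2 states that $U\bigcap D=\emptyset$, so no $x$ receives both values; hence $A$ is an honest total map into ${\bf 2}$. For (BM1), note that by construction $A^{-1}(P)=U$ is a union of the two up-sets $\uparrow (W_{+})$ and $\uparrow (W_{-}^{c})$, hence is itself an up-set of $S(n,r)$; equivalently $A^{-1}(N)=D=\downarrow W_{-}$ is a down-set. Since the $P$-fibre is an up-set, whenever $x\sqsubseteq y$ and $A(x)=P$ we get $y\in U$ and so $A(y)=P$, while if $A(x)=N$ there is nothing to check. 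Thus $A$ is order-preserving.

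Finally I turn to (BM2), which is the only step using the complementation and is where the real content sits. Suppose $A(w)=N$, i.e. $w\in\downarrow W_{-}$, so there is some $z\in W_{-}$ with $w\sqsubseteq z$. The key ingredient is that the unary operation $c$ is \emph{order-reversing}: $w\sqsubseteq z$ implies $z^{c}\sqsubseteq w^{c}$. This is the one fact external to the basis axioms; it holds because $c$ corresponds, under the bijection $*$, to set-complementation in $I(n,r)$, and antitonicity can be read off directly from the defining formula of $c$, exactly in the spirit of the computations carried out in Lemma \ref{propertiesSublattices}. Granting this, from $z\in W_{-}$ we have $z^{c}\in W_{-}^{c}$, and $z^{c}\sqsubseteq w^{c}$ gives $w^{c}\in\,\uparrow (W_{-}^{c})\subseteq U$; therefore $A(w^{c})=P$, as required.

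The main obstacle is precisely the antitonicity of $c$: once that is in hand, everything else is a formal unravelling of the definitions of $\uparrow$, $\downarrow$ and of the basis axioms. I would also remark that axiom B1 plays no role in this direction of the argument — it is the disjointness and covering axioms B2 and B3, together with the order-reversal of $c$, that do all the work. B1 is instead what guarantees, in the converse direction treated in \cite{BisChias2}, that a map in $\mathcal{W}_{+}(S(n,r),{\bf 2})$ reconstructs a genuine basis.
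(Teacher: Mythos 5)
Your proposal is necessarily a different route from the paper's, because the paper contains no proof of Lemma \ref{basisKer(A)} at all: it is imported verbatim from \cite{BisChias2} (``We will also use the following result that was proved in \cite{BisChias2}''). What you have written is therefore a self-contained verification where the paper relies on a citation to a more general setting, and your division of labour is the right one: B3 makes $A$ total, B2 makes the two clauses consistent (so $A$ is a well-defined map into ${\bf 2}$), the fact that $\uparrow (W_{+})\bigcup \uparrow (W_{-}^{c})$ is a union of up-sets gives (BM1), and order-reversal of $c$ gives (BM2) via $w\sqsubseteq z\in W_{-}$, hence $z^{c}\sqsubseteq w^{c}$ with $z^{c}\in W_{-}^{c}$. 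Your side remark that B1 is never used in this direction is also correct; that is a mild sharpening one does not get from the citation.

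The one point you must tighten is the justification of the crucial fact that $c$ is order-reversing on $(S(n,r),\sqsubseteq)$. The reason you offer first --- that $c$ corresponds under $*$ to set-complementation in $I(n,r)$ --- is not sufficient, because $\sqsubseteq$ is \emph{not} the inclusion order transported by $*$: in $S(3,2)$ one has $10|1\sqsubseteq 10|0$ even though $(10|0)^{*}=\{\tilde{1}\}\subseteq\{\tilde{1},\overline{1}\}=(10|1)^{*}$, so $*$ is neither monotone nor antitone with respect to $\subseteq$, and antitonicity of set-complementation proves nothing here. The fact itself is true, but it needs a short counting argument about sorted, zero-padded strings rather than being ``read off'' from the formula for $c$: for the positive halves, $v\sqsubseteq w$ is equivalent to requiring, for every threshold $t$, that $|\{i\ge t:\tilde{i}\in v^{*}\}|\le|\{i\ge t:\tilde{i}\in w^{*}\}|$, and complementation replaces each such count by $(r-t+1)$ minus it, reversing every inequality; the same computation handles the negative halves, giving $w^{c}\sqsubseteq v^{c}$ whenever $v\sqsubseteq w$. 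With that small lemma stated and proved, your argument is complete and correct.
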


\begin{theorem}\label{mainResult}
If $q$ is a fixed integer with $2^{n-1}\leq q \leq 2^{n}-2^{n-r}$ then there exists a boolean map $A_q\in \mathcal{W}_{+}(n,r)$ such that $\left|S_{A_q}^{+}(n,r)\right|=q$.
\end{theorem}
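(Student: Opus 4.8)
The plan is to realize every intermediate value $q$ between $\gamma^*(n,r)=2^{n-1}$ and $\eta^*(n,r)=2^n-2^{n-r}$ by constructing, for each such $q$, a basis $\langle Y_+ \mid Y_- \rangle$ whose associated map $A_q$ (built via Lemma \ref{basisKer(A)}) satisfies $|S^+_{A_q}(n,r)|=q$. The key observation from the proof of Theorem \ref{firstTheorem} is that for \emph{every} $A\in\mathcal{W}_+(n,r)$ the values on $S_1^+\dot\bigcup S_2^+$ are forced to be $P$ and those on $S_1^-\dot\bigcup S_2^-$ are forced to be $N$; these contribute the fixed count $2\cdot 2^{n-r-1}=2^{n-r}$ of mandatory $P$'s. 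All the flexibility therefore lives in the ``middle layer'' $S_1^{\pm}(n,r)\dot\bigcup S_2^{\pm}(n,r)$, which has $2(2^{n-1}-2^{n-r})$ elements. Since $\gamma^*$ corresponds to turning on $P$ only on $S_2^{\pm}$ (giving $2^{n-r-1}+2^{n-r-1}+(2^{n-1}-2^{n-r})=2^{n-1}$ positive values) and $\eta^*$ corresponds to turning on $P$ on all of $S_1^{\pm}\dot\bigcup S_2^{\pm}$, the task reduces to: \emph{tune the number of $P$'s inside $S_1^{\pm}(n,r)$ from $0$ up to its full size $2^{n-1}-2^{n-r}$, one unit at a time}, while keeping $S_2^{\pm}$ entirely at $P$.

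\textbf{The one-step-at-a-time mechanism.}
First I would fix $S_2^{\pm}(n,r)$ to carry value $P$ throughout (as in the function $g$) and view the choice of where $A_q=P$ on $S_1^{\pm}(n,r)$ as the choice of an up-set $U\subseteq S_1^{\pm}(n,r)$: the monotonicity (BM1) forces the $P$-region inside $S_1^{\pm}$ to be upward closed. By Lemma \ref{propertiesSublattices}(iv), $\downarrow S_1^{\pm}\subseteq S_1^{\pm}\dot\bigcup S_1^-$, so an up-set chosen inside $S_1^{\pm}$ does not drag any forced-$N$ element of $S_1^-$ into $P$; and by (iii)+(v) the complementation condition (BM2) is compatible with $S_2^{\pm}$ being all $P$ (since $(S_1^{\pm})^c=S_2^{\pm}$, any $N$ in $S_1^{\pm}$ has its complement in $S_2^{\pm}$, which is $P$). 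The crux is then the purely order-theoretic fact that in the finite distributive lattice $S_1^{\pm}(n,r)$ one can build a chain of up-sets
\[
\emptyset=U_0\subsetneq U_1\subsetneq\cdots\subsetneq U_m=S_1^{\pm}(n,r),\qquad m=2^{n-1}-2^{n-r},
\]
with $|U_{k+1}\setminus U_k|=1$; equivalently, one removes maximal elements one at a time. For each $k$ I set $Y_+$ and $Y_-$ to be the appropriate antichains cutting out $U_k$ (together with the minimal generators of the forced $P$-region and the maximal elements of the forced $N$-region), verify $\langle Y_+\mid Y_-\rangle$ is a basis, and read off $|S^+_{A_q}(n,r)| = 2^{n-1}+k$. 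Letting $k$ run over $0,\dots,m$ sweeps $q$ through exactly $2^{n-1},2^{n-1}+1,\dots,2^n-2^{n-r}$.

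\textbf{Verifying the basis axioms — the main obstacle.}
The bookkeeping for $|S^+|$ is routine once the up-set picture is fixed, so the genuine work is checking that each candidate couple $\langle Y_+ \mid Y_-\rangle$ actually satisfies (B1), (B2), (B3) of Definition \ref{def base S(n,r)}. This is where I expect the difficulty to concentrate, because the conditions intertwine $\uparrow Y_+$, $\uparrow(Y_-^c)$ and $\downarrow Y_-$ and one must confirm both the disjointness (B1, B2) and the covering (B3) of all of $S(n,r)$. The clean way to handle this is to define $Y_-$ as the antichain of maximal elements of the $N$-region (the complement of $U_k$ inside $S_1^{\pm}$, together with all of $S_1^-\dot\bigcup S_2^-$) and $Y_+$ as the minimal elements of the $P$-region not already captured by $\uparrow(Y_-^c)$, then use Lemma \ref{propertiesSublattices} to localize all three checks to the two halves $S_1,S_2$ separately. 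In particular (B3) amounts to showing every string lands in the $P$-region or the $N$-region, which follows from $S(n,r)=S_1^+\dot\bigcup S_1^{\pm}\dot\bigcup S_1^-\dot\bigcup S_2^+\dot\bigcup S_2^{\pm}\dot\bigcup S_2^-$ together with the fact that $U_k$ and its complement partition $S_1^{\pm}$. Once the basis axioms are confirmed for each $k$, Lemma \ref{basisKer(A)} delivers $A_q\in\mathcal{W}_+(n,r)$ and the theorem follows.
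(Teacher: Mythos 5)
Your proposal is correct and follows essentially the same route as the paper's own proof: the paper likewise forces $P$ on $S_1^+(n,r)\dot{\bigcup}S_2^+(n,r)\dot{\bigcup}S_2^{\pm}(n,r)$ and $N$ on $S_1^-(n,r)\dot{\bigcup}S_2^-(n,r)$, realizes the remaining $p=q-2^{n-1}$ values $P$ as an up-set of $S_1^{\pm}(n,r)$ grown one element at a time (concretely, the top rank levels $\mathfrak{U}_k$ plus $s$ elements of the next level $\Re_{k+1}$), and certifies the resulting map through a basis $\langle Y_+ | Y_-\rangle$ together with Lemma \ref{basisKer(A)}. Your abstract choice of antichains (maximal elements of the $N$-region for $Y_-$, minimal elements of the $P$-region not already in $\uparrow(Y_-^c)$ for $Y_+$) reproduces exactly the paper's $T_-\dot{\bigcup}\{\theta\}$ and $T_+$ (with $\alpha$ adjoined precisely in the paper's case (a2)), so the verification of B1)--B3) that you defer is exactly the content of the paper's Steps 1--3.
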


\begin{proof}
\noindent Let $q$ be a fixed integer  such that $2^{n-1}\leq q\leq2^{n}-2^{n-r}$. We determine a specific Boolean total map $A\in \mathcal{W}_{+}(n,r)$ such that $\left|S_{A}^{+}(n,r)\right|= q$. We proceed as follows.

\noindent The case $r=1$ is proved in the previous Theorem \ref{firstTheorem}. Let us assume then $r>1$.


\noindent Since $S_1^{\pm}(n,r)$ is a finite distributive sublattice of graded lattice $S(n,r)$, also $S_1^{\pm}(n,r)$ is a graded lattice. We denote by $R$ the rank of $S_1^{\pm}(n,r)$ and with $\rho_1$ its rank function. Note that the bottom of $S_1^{\pm}(n,r)$ is $b_1=10\ldots0|1\ldots(n-r-1)(n-r)$ and the top is $t_1=r(r-1)\ldots20|0\ldots0(n-r)$.

\noindent We write $q$ in the form $ q=2^{n-1}+p$ with $0\leq p\leq2^{n}-2^{n-r}-2^{n-1}=2^{n-1}-2^{n-r}=\left|S_1^{\pm}(n,r)\right|$. We will build a map $A\in W_{+}(n,r)$ such that $\left|S_A^{+}(n,r)\right|=2^{n-1}+p$.

\noindent If $p=0$ we take $A=A_{f}$, with $f$ as in Theorem \ref{firstTheorem} and hence we have $\left|S_A^{+}(n,r)\right|=2^{n-1}$.

\noindent If $p=2^{n-1}-2^{n-r}$ we take $A=A_{g}$, with $g$ as in Theorem \ref{firstTheorem}, and  we have $\left|S_A^{+}(n,r)\right|=2^n -2^{n-r}$.

\noindent Therefore we can assume that $0<p<2^{n-1}-2^{n-r}$.

\noindent If $0\leq i\leq R$, we denote by $\Re_{i}$ the set of elements $w\in S_1^{\pm}(n,r)$ such that $\rho_1(w)=R-i$  and we also set $\beta_{i}:=|\Re_{i}|$ to simplify the notation. We write each $\Re_{i}$ in the following form : $\Re_{i} = \{v_{i1},\dots,v_{i\beta_i} \}$. If $0\leq l\leq R$ we set $\mathfrak{U}_l:=\dot{\underset{i=0,\dots,l}{\bigcup}}\Re_i$.

\noindent If $0\leq l\leq R-2$ we set $\mathfrak{B}_l:=\dot{\underset{i=l+2,\dots,R}{\bigcup}}\Re_i$ and $\mathfrak{B}_{R-1}:= \mathfrak{B}_R := \emptyset$. We can then write $p$ in the form $p=\sum_{i=0}^{k}|\Re_{i}| + s = |\mathfrak{U}_k|+s$, for some $0\leq s<|\Re_{k+1}|$ and some $0\leq k\leq R-1$. Depending on the previous number $s$ we partition $\Re_{k+1}$ into the following two disjoint subsets : $\Re_{k+1}=\{v_{(k+1)1},\ldots,v_{(k+1)s}\}\dot{\bigcup}\{v_{(k+1)(s+1)},\ldots,v_{(k+1)\beta_{k+1}}\}$, where the first subset is considered empty if $s=0$.

\noindent In the sequel, to avoid an overload of notations, we write simply $v_i$ instead of $v_{(k+1)i}$, for $i=1,\dots,\beta_{k+1}$.\\
\noindent Let us note that $S_1^{\pm}(n,r) = \mathfrak{U}_k\dot{\bigcup} \Re_{k+1} \dot{\bigcup} \mathfrak{B}_k$.

We define now the map $A:S(n,r)\rightarrow\mathbf{2}$

\begin{equation}\label{formaA}
A(w)=\begin{cases}
\begin{array}{c}
P\\
P\\
N\\
N\end{array} & \begin{array}{c}
{\rm if}\,\, w\in S_2^{\pm}(n,r)\dot{\bigcup} S_1^+(n,r) \dot{\bigcup} S_2^+(n,r)\\
{\rm if}\,\, w\in\mathfrak{U}_k\dot{\bigcup}\left\{ v_1,\ldots,v_s\right\} \\
{\rm if}\,\, w\in \mathfrak{B}_k\dot{\bigcup}\left\{ v_{s+1},\ldots,v_{\beta_{k+1}}\right\}\\
{\rm if}\,\, w\in S_1^-(n,r)\dot{\bigcup} S_2^-(n,r) \end{array}\end{cases}
\end{equation}

\noindent Let us observe that $|S_A^+(n,r)|= |S_2^{\pm}(n,r)\dot{\bigcup} S_1^+(n,r) \dot{\bigcup} S_2^+(n,r)|+|\mathfrak{U}_k\dot{\bigcup}\left\{ v_1,\ldots,v_s\right\}|=(2^{n-1} - 2^{n-r}) + 2^{n-r-1} + 2^{n-r-1} + |\mathfrak{U}_k|+s = 2^{n-1}+p = q$.

\noindent Therefore, if we show that $A\in \mathcal{W}_{+}(n,r)$, the theorem is proved. 

\noindent  We write $\Re_{k}$ in the following way: $\Re_{k}=\left\{ w_{1},\ldots,w_{t}\right\}\dot{\bigcup}\left\{w_{t+1},\ldots,w_{\beta_{k}}\right\}$, where $\left\{ w_{1},\ldots,w_{t}\right\} =\Re_{k}\bigcap\uparrow\left\{ v_{1},\ldots,v_{s}\right\} $ and $\left\{w_{t+1},\ldots,w_{\beta_{k}}\right\}=\Re_{k} \setminus \left\{ w_{1},\ldots,w_{t}\right\}$. Analogously $\Re_{k+2}=\left\{z_{1},\ldots,z_{q}\right\}\dot{\bigcup}\left\{z_{q+1},\ldots,z_{\beta_{k+2}}\right\} $, where $\left\{ z_{q+1},\ldots,z_{\beta_{k+2}}\right\} =\Re_{k+2}\bigcap\downarrow\left\{ v_{s+1},\ldots,v_{\beta_{k+1}}\right\} $ and $\left\{z_1,\ldots,z_q\right\}=\Re_{k+2} \setminus \left\{z_{q+1},\ldots,z_{\beta_{k+2}}\right\}$.\\

\noindent We can see a picture of this partition of the sublattice $S_{1}^{\pm}(n,r)$ in the following figure:

 \begin{tikzpicture}[>=stealth',shorten >=1pt,auto,node distance=4cm,
                    semithick]

  \tikzstyle{initial}=[rectangle,draw=white, minimum height=3em,minimum width=4em]

\node[initial] (a) at (0,0) {$10\ldots0|1\ldots(n-r-1)(n-r)$};
\node[initial] (a1) at (7,0) {$\Re_{R}$};
\node[initial]  (a2) at (5,0) {};
\node[initial]  (b) at (2,3) {$z_{q}$};
\node[initial]  (c) at (3,3) {$\ldots$};
\node[initial]  (d) at (4,3) {$z_{1}$};
\node[initial]  (c1) at (7,3) {$\Re_{k+2}$};
\node[initial]  (c2) at (5,3) {};
\node[initial]  (b1) at (7,1.5) {$\Re_{k+3}$};
\node[initial]  (b2) at (5,1.5) {};
\node[initial]  (e) at (-1.3,3) {$z_{q+1}$};
\node[initial]  (f) at (-2,3) {$\ldots$};
\node[initial]  (g) at (-3,3) {$z_{\tiny {\beta_{k+2}}}$};
\node[draw,ellipse, fit=(e) (f) (g)] {};
\node[initial]  (h) at (-1.3,6) {$v_{s+1}$};
\node[initial]  (i) at (-2,6) {$\ldots$};
\node[initial]  (l) at (-3,6) {$v_{\tiny {\beta_{k+1}}}$};
\node[draw,ellipse, fit=(h) (i) (l)] {};
\node[initial]  (m) at (2,6) {$v_{s}$};
\node[initial]  (n) at (3,6) {$\ldots$};
\node[initial]  (o) at (4,6) {$v_{1}$};
\node[initial]  (d1) at (7,6) {$\Re_{k+1}$};
\node[initial]  (d2) at (5,6) {};
\node[draw,rectangle, fit=(m) (n) (o)] {};
\node[initial]  (p) at (2,9) {$w_{t}$};
\node[initial]  (q) at (3,9) {$\ldots$};
\node[initial]  (r) at (4,9) {$w_{1}$};
\node[initial]  (e1) at (7,9) {$\Re_{k}$};
\node[initial]  (e2) at (5,9) {};
\node[draw,rectangle, fit=(p) (q) (r)] {};
\node[initial]  (s) at (-1.3,9) {$w_{t+1}$};
\node[initial]  (t) at (-2,9) {$\ldots$};
\node[initial]  (u) at (-3,9) {$w_{\tiny {\beta_{k}}}$};
\node[initial]  (f1) at (7,10.5) {$\Re_{k-1}$};
\node[initial]  (f2) at (5,10.5) {};
\node[initial] (v) at (0,12)    {$r(r-1)\ldots 20|0\ldots0(n-r)$};
\node[initial]  (g1) at (7,12) {$\Re_{0}$};
\node[initial]  (g2) at (5,12) {};
\draw (v)--(q);
\draw (v)--(t);
\draw (n)--(r);
\draw (n)--(q);
\draw (n)--(p);
\draw (f)--(i);
\draw (f)--(h);
\draw (f)--(l);
\draw (a)--(c);
\draw (a)--(f);
\draw[->] (a1)--(a2);
\draw[->] (b1)--(b2);
\draw[->] (c1)--(c2);
\draw[->] (d1)--(d2);
\draw[->] (e1)--(e2);
\draw[->] (f1)--(f2);
\draw[->] (g1)--(g2);
\end{tikzpicture}

 Depending on $s$ and $k$, we build now a particular basis for $S(n,r)$. 

\noindent To such aim, let us consider the minimum $\alpha=10\ldots0|01\ldots(n-r-1)$ of $S_2^{\pm}(n,r)$ and the subsets $T_+:=\{v_1,\ldots,v_s,w_{t+1},\ldots,w_{\beta_{k}}\}$ and $T_{-}:=\left\{ v_{s+1},\ldots,v_{\beta_{k+1}},z_{1},\ldots,z_{q}\right\}$. Let us distinguish two cases:

 \begin{itemize}
\item[(a1)] $\alpha\in\uparrow T_+  $\\
\item[(a2)] $\alpha\notin\uparrow T_+$.

\end{itemize}

\noindent We define two different couples of subsets  as follows:\\
in the case $(a1)$ we set $Y_{+}:=T_+ \,\,{\rm and}\,\, Y_-:= T_- \dot{\bigcup} \{\theta\} $; in the case $(a2)$ we set $Y_{+}:=T_+ \bigcup \{\alpha\} \,\,{\rm and}\,\, Y_-:= T_- \dot{\bigcup} \{\theta\}.$

 \hspace{1cm}

 {\it Step 1 $\langle Y_+ | Y_{-} \rangle$ is a couple of two disjoint anti-chains of $S(n,r)$}

In both cases (a1) and (a2) it is obvious that $Y_{+}\bigcap Y_{-}=\emptyset$.\\
Case (a1): the elements $\left\{ v_{1},\ldots,v_{s}\right\}$ are not comparable between them because they have all rank $R-(k+1)$ and analogously for the elements $\left\{ w_{t+1},\ldots,w_{\beta_{k}}\right\}$ that have all rank $R-k$. Let now $v\in \left\{ v_{1},\ldots,v_{s}\right\}$ and $w\in\left\{ w_{t+1},\ldots,w_{\beta_{k}}\right\} $; then $w\notin\downarrow v$ because $\rho_1(v)<\rho_1(w)$ and $w\notin\uparrow v$ because $\left\{ w_{t+1},\ldots,w_{\beta_{k}}\right\} \bigcap\uparrow\left\{ v_{1},\ldots,v_{s}\right\} =\emptyset$ by construction. For the elements in $Y_{-}$ different from $\theta$, we can proceed as for $Y_{+}$. On the other side, we can observe that $\theta$ is not comparable with none of the elements $ v_{s+1},\ldots,v_{\beta_{k+1}}, z_{1},\ldots,z_{q}$ since these are all in $S_{1}^{\pm}(n,r)$ while $\theta \in S_2^-(n,r)$. Thus $Y_{+}$ is an anti-chain.\\
Case (a2): In this case we only must show that $\alpha$ is not comparable with none of the elements $v_{1},\ldots,v_{s}, w_{t+1},\ldots,w_{\beta_{k}}$. At first from the fact that $\alpha\notin\uparrow T_+$ it follows  $v_{i}\notin\downarrow\alpha$ for each $i=1,\ldots,s$ and $w_{j}\notin\downarrow\alpha$ for each $j=t+1,\ldots,\beta_{k}$. Moreover, the elements $v_{1},\ldots,v_{s}$ and $w_{t+1},\ldots,w_{\beta_{k}}$ are all in $S_{1}^{\pm}(n,r)$, hence they have the form $i_{1}\ldots i_{r-1}0|j_{1}\ldots j_{n-r-1}(n-r)$, with $i_{1}\succ 0^§$, while $\alpha=10\ldots0|01\ldots(n-r-1)$; so $\alpha\notin\downarrow v_ {i}$ and $\alpha\notin\downarrow w_{j}$ for each $i=1,\ldots,s$ and each $j=t+1,\ldots,\beta_{k}$.
 
 \hspace{1cm}
 
{\it Step 2 $\langle Y_+ | Y_{-} \rangle$ is a basis for $S(n,r)$}

We must see that B1), B2) and B3) hold in the both cases (a1) and (a2).\\
Case(a1):

\noindent B1) Let us begin to observe that $T_-^c = \left\{ v_{s+1}^{c},\ldots,v_{\beta_{k+1}}^{c}, z_{1}^{c},\ldots,z_{q}^{c}\right\} \subseteq\left(S_{1}^{\pm}\left(n,r\right)\right)^{c}=S_{2}^{\pm}(n,r)$. Since $\theta^{c}=\Theta$, we have then $Y_{-}^{c}=T_-^c \dot{\bigcup}\{\theta^c\}\subseteq S_{2}^{\pm}(n,r)\dot{\bigcup}\left\{\Theta\right\} \subseteq S_{2}^{\pm}(n,r)\dot{\bigcup}S_{1}^{+}(n,r)$.  On the other hand, since $\downarrow Y_+ \subseteq \downarrow S_{1}^{\pm}(n,r)$, by lemma \ref{propertiesSublattices} iv) we have also that $\downarrow Y_{+}\subseteq S_{1}^{\pm}(n,r) \dot{\bigcup}S_{1}^{-}(n,r)$. Hence $(\downarrow Y_{+}) \bigcap (Y_{-}^c) = \emptyset$. This proves B1).

\noindent B2) We show at first that $\uparrow Y_{-}^{c}\bigcap \downarrow Y_{-}=\emptyset$. Since $Y_{-}^{c}\subseteq S_{2}^{\pm}(n,r)\dot{\bigcup} \left\{ \Theta\right\}$, we have that $\uparrow Y_{-}^{c}\subseteq \uparrow(S_{2}^{\pm}(n,r))\bigcup\uparrow \Theta$. By lemma \ref{propertiesSublattices} i) and iii) we have then $\uparrow Y_{-}^{c}\subseteq S_{2}^{\pm}(n,r)\dot{\bigcup} S_{1}^{+}(n,r)\dot{\bigcup} S_{2}^{+}(n,r)$. On the other side, since $\downarrow T_- \subseteq\downarrow S_{1}^{\pm}(n,r))$, by lemma \ref{propertiesSublattices} iv) it follows that $\downarrow T_- \subseteq S_{1}^{\pm}(n,r)\dot{\bigcup} S_{1}^{-}(n,r)$. By proposition \ref{propertiesSublattices} ii), we have $\downarrow \theta = S_{1}^{-}(n,r)\dot{\bigcup} S_{2}^{-}(n,r)$. Hence it holds $\downarrow Y_- \subseteq S_{1}^{\pm}(n,r)\dot{\bigcup }S_{1}^{-}(n,r)\dot{ \bigcup} S_{2}^{-}(n,r)$. This proves that $\uparrow Y_{-}^{c}\bigcap\downarrow Y_{-}=\emptyset$. We show now that also $\uparrow Y_{+}\bigcap\downarrow Y_{-} = \emptyset$; we proceed by contradiction. Let us suppose that there exists an element  $z\in\uparrow Y_{+}\bigcap\downarrow Y_{-}$, then there are two elements $ w_{+}\in Y_{+}$ and $ w_{-}\in Y_{-}$ such that $w_{+}\sqsubseteq z\sqsubseteq w_{-}$,hence $w_{+}\sqsubseteq w_{-}$. We will distinguish the following five cases, and in each of them we will find a contradiction.

\noindent  a) $w_{+}\in\left\{ v_{1},\ldots,v_{s}\right\}$ and $w_{-}\in\left\{ v_{s+1},\ldots,v_{\beta_{k+1}}\right\} $. \\
  In this case $w_+$ and $w_-$ are two distinct elements having both rank $R-(k+1)$ and such that $w_{+}\sqsubseteq w_{-}$; it is not possible.
  
\noindent  b) $w_{+}\in\left\{ v_{1},\ldots,v_{s}\right\}$ and $w_{-}\in\left\{ z_{1},\ldots,z_{q}\right\} $.\\
  In this case $w_+$ has rank $R-(k+1)$ while $w_{-}$ has rank $R-(k+2) < R-(k+1)$, and this contradicts the condition $w_{+}\sqsubseteq w_{-}$.
  
\noindent c) $w_{+}\in\left\{ w_{t+1},\ldots,w_{\beta_{k}}\right\} $ and $w_{-}\in\left\{ v_{s+1},\ldots,v_{\beta_{k+1}}\right\}. $\\
  This case is similar to the previous because $w_+$ has rank $R-k$ while $w_-$ has rank $R-(k+1)$.
  
\noindent d) $w_{+}\in\left\{ w_{t+1},\ldots,w_{\beta_{k}}\right\} $ and $w_{-}\in\left\{ z_{1},\ldots,z_{q}\right\}.$\\
  Similar to the previous because $w_+$ has rank $R-k$ while $w_-$ has rank $R-(k+2)$.
  
 \noindent e) $w_{+}\in\left\{v_{1},\ldots,v_{s}, w_{t+1},\ldots,w_{\beta_{k}}\right\} $ and $w_{-}= \theta $.\\
  In this case the condition $w_{+}\sqsubseteq w_{-}$ implies that $w_{+}\in \downarrow \theta$; since $\downarrow \theta = S_{1}^{-}(n,r)\dot{\bigcup} S_{2}^{-}(n,r)$ this is not possible since $w_{+}\in S_{1}^{\pm}(n,r)$.

\noindent B3) Since $\alpha$ is the minimum of $S_{2}^{\pm}(n,r)$ we have that  $S_{2}^{\pm}(n,r)\subseteq\uparrow\alpha$, moreover $\alpha\in\uparrow Y_{+}$; therefore $S_{2}^{\pm}(n,r)\subseteq \uparrow\alpha\subseteq\uparrow Y_{+}$. Since $\Theta = \theta^c \in Y_-^c$, it follows that $\uparrow \Theta \subseteq \uparrow Y_{-}^{c}$. By lemma \ref{propertiesSublattices} i) we have then that $(S_{1}^{+}(n,r)\dot{\bigcup }S_{2}^{+}(n,r))=\uparrow\Theta\subseteq\uparrow Y_{-}^{c}$. By lemma \ref{propertiesSublattices} ii) we also have that $(S_{1}^{-}(n,r)\dot{\bigcup } S_{2}^{-}(n,r))=\downarrow\theta\subseteq\downarrow Y_{-}$.\\
To complete the proof of B3) let us observe that $S_{1}^{\pm}(n,r)=(\overset{k}{\underset{i=0}{\bigcup}}\Re_{i})\dot{\bigcup}\Re_{k+1}\dot{\bigcup}
(\overset{R}{\underset{j=k+2}{\bigcup}}\Re_{j})$, where $(\overset{k}{\underset{i=0}{\bigcup}}\Re_{i})\subseteq \uparrow \Re_k\subseteq \uparrow Y_{+}$ , $(\overset{R}{\underset{j=k+2}{\bigcup}}\Re_{j})\subseteq \downarrow \Re_{k+2}\subseteq \downarrow Y_{-}$.

 \noindent Moreover, since $\Re_{k+1}=\left\{ v_{1},\ldots,v_{s}\right\} \dot{\bigcup}\left\{ v_{s+1},\ldots,v_{\beta_{k+1}}\right\}$, with $\left\{ v_{1},\ldots,v_{s}\right\}\subseteq Y_+ \subseteq \uparrow Y_{+} $ and $\left\{ v_{s+1},\ldots,v_{\beta_{k+1}}\right\}\subseteq Y_- \subseteq \downarrow Y_{-}$, then  $\Re_{k+1}\subseteq(\uparrow Y_{+}\bigcup\downarrow Y_{-})$. This shows that $S_{1}^{\pm}(n,r)\subseteq(\uparrow Y_{+}\bigcup\downarrow Y_{-})$. Since the six sublattices  $S_i^+(n,r)$, $S_i^{\pm}(n,r)$ and $S_i^-(n,r)$ for $i=1,2$ are a partition of $S(n,r)$, the property B3) is proved.

Case(a2):

\noindent B1) We begin to show that $\alpha \notin Y_{-}^{c}$. In fact, it holds that $\alpha=(r(r-1)\dots20|0\dots0(n-r))^{c}$; suppose that $\alpha \in Y_{-}^{c}$ and show that we obtain a contradiction. By $\alpha \in Y_{-}^{c}$ it follows $\alpha^{c}=t_{1}\in Y_{-}$. But $t_{1}$ is the top of $S_{1}^{\pm}(n,r)$, so $Y_{-}=t_{1}$, since $Y_{-}$ is anti-chain. This means that $\Re_{k+1}=\Re_0$ and this is not possible, of course. Since $Y_+=T_+ \dot{\bigcup}\{\alpha\}$, we have $(\downarrow Y_+)\bigcap Y_-^c = ((\downarrow T_+) \bigcap Y_-^c) \bigcup ((\downarrow \alpha) \bigcap Y_-^c)$ and, moreover, as in the proof of B1) in the case (a1), we also have that $(\downarrow T_+) \bigcap Y_-^c = \emptyset$; therefore, to prove B1) it is sufficient to show that $(\downarrow \alpha) \bigcap Y_-^c = \emptyset$. As in the proof of B1) in the case (a1), we have $Y_{-}^{c} \subseteq S_{2}^{\pm}(n,r)\dot{\bigcup}\left\{\Theta\right\}$. Since $\alpha$ is the minimum of $S_2^{\pm}(n,r)$ and $\Theta \notin \downarrow \alpha$, it follows that the unique element of $\downarrow \alpha$ that can belong to $Y_-^c$ is $\alpha$, but we have before shown that this is not true.

\noindent B2) As in the previous case we have $(\uparrow Y_{-}^{c})\bigcap\downarrow Y_{-}=\emptyset$, moreover $(\uparrow Y_+) \bigcap (\downarrow Y_-) = ((\uparrow T_+)\bigcup(\uparrow \alpha))\bigcap (\downarrow Y_-) ((\uparrow T_+)\bigcap(\downarrow Y_-))\bigcup((\uparrow \alpha)\bigcap(\downarrow Y_-)).$
As in the case (a1) we have $((\uparrow T_+)\bigcap(\downarrow Y_-))=\emptyset$, therefore, to prove B2) also in the case (a2), it is sufficient to show that $((\uparrow \alpha)\bigcap(\downarrow Y_-))=\emptyset$. As in the proof of B2) in the case (a1) it results that $\downarrow Y_- \subseteq S_{2}^{\pm}(n,r)\dot{\bigcup}S_{1}^{-}(n,r)\dot{ \bigcup} S_{2}^{-}(n,r)$ and, by definition of $\alpha$, it is easy to observe that $(\uparrow \alpha) \bigcap (S_{1}^{\pm}(n,r)\dot{\bigcup} S_{1}^{-}(n,r) \dot{\bigcup} S_{2}^{-}(n,r))= \emptyset$. Hence $((\uparrow \alpha)\bigcap(\downarrow Y_-))=\emptyset$.

\noindent B3) Identical to that of case (a1).

{\it Step 3 The map $A$ defined in (\ref{formaA}) is such that $A\in \mathcal{W}_{+}(n,r)$ }

\noindent Since we have proved that $\langle Y_{+}|Y_{-}\rangle$ is a basis for $S(n,r)$, by lemma \ref{basisKer(A)} it follows that the map $A \in \mathcal W_{+}(S(n,r),{\bf 2})$ if the two following identities hold:

\begin{equation}\label{firstEqualityA}
(\uparrow Y_{+})\bigcup(\uparrow Y_{-}^c)=S_2^{\pm}(n,r)\dot{\bigcup} S_1^+(n,r) \dot{\bigcup} S_2^+(n,r)\dot{\bigcup} \mathfrak{U}_k\dot{\bigcup}\left\{ v_1,\ldots,v_s\right\}
\end{equation}
and

\begin{equation}\label{secondEqualityA}
\downarrow Y_{-}=\mathfrak{B}_k\dot{\bigcup}\left\{ v_{s+1},\ldots,v_{\beta_{k+1}}\right\}\dot{\bigcup} S_1^-(n,r)\dot{\bigcup} S_2^-(n,r)
\end{equation}

We prove at first (\ref{secondEqualityA}). By definition of $\mathfrak{B}_k$ and $T_-$ it easy to observe that $\mathfrak{B}_k\dot{\bigcup}\left\{ v_{s+1},\ldots,v_{\beta_{k+1}}\right\} \subseteq \downarrow T_-$ and moreover by lemma \ref{propertiesSublattices} ii) we also have that $\downarrow \theta=S_{1}^{-}(n,r)\bigcup S_{2}^{-}(n,r)$, hence, since $\downarrow Y_- = \downarrow \theta \bigcup \downarrow T_-$, it results that\\
$\mathfrak{B}_k\dot{\bigcup}\left\{ v_{s+1},\ldots,v_{\beta_{k+1}}\right\}\bigcup S_1^-(n,r)\dot{\bigcup} S_2^-(n,r) \subseteq \downarrow Y_-$.
On the other hand, by lemma \ref{propertiesSublattices} iv) we have that $\downarrow T_- \subseteq S_{1}^{\pm}(n,r)\dot{\bigcup} S_{1}^{-}(n,r)$ because $T_-$ is a subset of $S_{1}^{\pm}(n,r)$. At this point let us note that the elements of $\downarrow T_-$ that are also in $S_{1}^{\pm}(n,r)$ must belong necessarily to the subset $\mathfrak{B}_k\dot{\bigcup}\left\{ v_{s+1},\ldots,v_{\beta_{k+1}}\right\}$. This proves the other inclusion and hence (\ref{secondEqualityA}).\\
To prove now (\ref{firstEqualityA}) we must distinguish the cases (a1) and (a2). We set $\Delta := S_2^{\pm}(n,r)\dot{\bigcup} S_1^+(n,r) \dot{\bigcup} S_2^+(n,r)\bigcup \mathfrak{U}_k\dot{\bigcup}\left\{ v_1,\ldots,v_s\right\}$.
Let us first examine the case (a1).\\
Since $\alpha$ is the minimum of $S_{2}^{\pm}(n,r)$ we have $S_{2}^{\pm}(n,r) \subseteq \uparrow \alpha \subseteq \uparrow Y_+$. Moreover, since $Y_-=T_-\bigcup\{\theta\}$, it follows that
$\uparrow Y_-^c \supseteq \uparrow \theta^c = \uparrow (\Theta) = S_1^+(n,r)\dot{\bigcup} S_2^+(n,r)$ by lemma \ref{propertiesSublattices} i).
Finally, since $\mathfrak{U}_k\dot{\bigcup}\left\{ v_1,\ldots,v_s\right\} \subseteq \uparrow T_+ = \uparrow Y_+$ the inclusion $\supseteq$ in (\ref{firstEqualityA}) is proved. To prove the other inclusion $\subseteq$ we begin to observe that $\uparrow Y_+ \bigcap (S_1^-(n,r)\bigcup S_1^-(n,r))=\emptyset$, therefore the elements of $\uparrow Y_+$ that are not in $S_2^{\pm}(n,r)\dot{\bigcup} S_1^+(n,r) \dot{\bigcup} S_2^+(n,r)$ must be necessarily in $S_{1}^{\pm}(n,r)$, and such elements, by definition of $T_+$ must be necessarily in
$\mathfrak{U}_k\dot{\bigcup}\left\{ v_1,\ldots,v_s\right\}$. This proves that $\uparrow Y_+ \subseteq \Delta$.\\
For $\uparrow(Y_-^c)$, we have $\uparrow(Y_-^c)=\uparrow(T_-^c\bigcup\{\theta\}^{c})= (\uparrow T_-^c)\bigcup(\uparrow \Theta)$, where $\uparrow(\Theta)=S_1^+(n,r)\dot{\bigcup} S_2^+(n,r)$ by lemma \ref{propertiesSublattices} i) and, since $T_-\subseteq S_{1}^{\pm}(n,r)$, also
$T_-^c \subseteq (S_{1}^{\pm}(n,r))^c = S_{2}^{\pm}(n,r)$, by lemma \ref{propertiesSublattices} v). Therefore $\uparrow T_-^c \subseteq \uparrow S_{2}^{\pm}(n,r) \subseteq S_{2}^{\pm}(n,r) \bigcup S_{2}^{+}(n,r)$ by lemma \ref{propertiesSublattices} iii). This shows that
$\uparrow(Y_-^c) \subseteq S_{2}^{\pm}(n,r) \bigcup S_{2}^{+}(n,r) \bigcup S_{1}^{+}(n,r) \subseteq \Delta$, hence the inclusion $\subseteq$.
The proof of (\ref{firstEqualityA}) in the case (a1) is therefore complete.\\
Finally, to prove (\ref{firstEqualityA}) in the case (a2), it easy to observe that the only difference with to respect case (a1) is when we must show that $\uparrow Y_+ \subseteq \Delta$. In fact, in the case (a2) it results that $\alpha \notin \uparrow T_+$ and $Y_+=T_+\bigcup\{\alpha\}$, while $Y_-$ is the same in both cases (a1) and (a2). Therefore, in the case (a2), the elements of $\uparrow Y_+$ that are not in $S_2^{\pm}(n,r)\dot{\bigcup} S_1^+(n,r) \dot{\bigcup} S_2^+(n,r)$ must be in $(\uparrow T_+) \bigcap S_{1}^{\pm}(n,r)$ or in $(\uparrow \alpha)\bigcap S_{1}^{\pm}(n,r)$. As in the case (a1) we have $(\uparrow T_+) \bigcap S_{1}^{\pm}(n,r)= \mathfrak{U}_k\dot{\bigcup}\left\{ v_1,\ldots,v_s\right\}$ and, since $\alpha$ is the minimum of
$S_{2}^{\pm}(n,r)$, it results that $\uparrow \alpha = \uparrow S_{2}^{\pm}(n,r) \subseteq S_{2}^{\pm}(n,r) \bigcup S_{2}^{+}(n,r)$ by lemma \ref{propertiesSublattices} iii), hence $(\uparrow \alpha)\bigcap S_{1}^{\pm}(n,r)=\emptyset$. Therefore, also in the case (a2), the elements of $\uparrow Y_+$ that are not in $S_2^{\pm}(n,r)\dot{\bigcup} S_1^+(n,r) \dot{\bigcup} S_2^+(n,r)$ must be necessarily in $\mathfrak{U}_k\dot{\bigcup}\left\{ v_1,\ldots,v_s\right\}$. The other parts of the proof are the same as in the case (a1).
Hence we have proved the identities (\ref{firstEqualityA}) and (\ref{secondEqualityA}). By lemma \ref{basisKer(A)} it follows then that the map $A\in \mathcal W_{+}(S(n,r),{\bf 2})$. Finally, by definition of $A$, we have obviously $A(\theta)=N$, $A(\xi _1)=P$ and $A(\Theta)=P$. This shows that $A\in \mathcal{W}_{+}(n,r)$. The proof is complete.
\end{proof}

To conclude  we emphasize the elegant symmetry of the induced partitions on $S(n,r)$ from the boolean total maps $A_q$'s constructed in the proof of the theorem \ref{mainResult}.

\end{document}